\newtheorem{thm}{Th\'eor\`eme}[section]
\newtheorem{prop}[thm]{Proposition}
\newtheorem{lemma}[thm]{Lemme}
\newtheorem{cor}[thm]{Corollaire}
\newtheorem{question}[thm]{Question}
\theoremstyle{definition}
\newtheorem{defin}[thm]{D\'efinition}
\theoremstyle{remark}
\newtheorem{rmk}[thm]{Remarque}
\newtheorem{remark}[thm]{Remarque}
\numberwithin{equation}{section} 
\newcommand{\Coker}{{\rm Coker}}
 \newcommand{\Ker}{{\rm Ker}}
\newcommand{\Gal}{{\rm Gal}}
\newcommand{\Br}{{\rm Br}}
\newcommand{\Q}{\mathbb Q}
\newcommand{\R}{\mathbb R}
\newcommand{\F}{\mathbb F}
\newcommand{\C}{\mathbb C}
\newcommand{\Z}{\mathbb Z}
\renewcommand{\P}{\mathbb P}
\renewcommand{\phi}{\varphi}
\def\im{{\rm im}}
\def\Pic{{\rm Pic}}
\def\Alb{{\rm Alb}}
\def\Br{{\rm Br}}
\def\k{{\overline k}}
\def\Gal{{\rm Gal}}
\def\X{{\overline X}}
\def\P{{\bf P}}
\def\X{{\overline X}}
\def\B{{\overline B}}
\def\k{{\overline k}}
\def\NS{{\rm NS}}
\def\Sym{{\rm Sym}}
 \author{J.-L. Colliot-Th\'el\`ene}
\title[Application d'Albanese et z\'ero-cycles]{Notes sur l'application d'Albanese pour les z\'ero-cycles}
\address{Universit\'e Paris-Saclay, CNRS, Laboratoire de math\'ematiques d'Orsay, 91405, Orsay, France.}
\email{jean-louis.colliot-thelene@universite-paris-saclay.fr}
\date{
8 juin 2025}
\begin{document}
	\maketitle
	\hypersetup{backref=true}

\begin{abstract} 
Pour $X$ une vari\'et\'e projective et lisse sur un corps $k$,
on a un homomorphisme du groupe de Chow $A_{0}(X)$ 
des z\'ero-cycles de degr\'e z\'ero
vers le groupe des points $k$-rationnels $\Alb_{X}(k)$
de la vari\'et\'e d'Albanese de $X$.
On discute la question de la surjectivit\'e de cette application.
Pour $k$ corps $p$-adique ou r\'eel, on donne des exemples
de non surjectivit\'e. Pour $k=\mathbb C$ le corps des complexes, on consid\`ere 
l'application $A_{0}(X_{F}) \to \Alb_{X}(F)$ pour $F$ extension de $\mathbb C$
et en particulier $F$ \'egal au corps des fonctions de $Alb_{X}$. 
On fait le lien avec des travaux  r\'ecents de C. Voisin  sur  la notion
de z\'ero-cycle universel  et sur les cycles de codimension deux 
sur les solides rationnellement connexes.
 \end{abstract}

 \section{Introduction}
 
 Soit $k$ un corps.
 Dans cet article, sauf mention expresse du contraire, on suppose $k$ de caract\'eristique z\'ero.
On note $\k$ une cl\^{o}ture alg\'ebrique de $k$ et $G=G_{k}=\Gal(\k/k)$ le groupe de Galois.

Soit $X$ une $k$-vari\'et\'e propre. On note $Z^0(X)$ le groupe des z\'ero-cycles sur $X$,
qui est le groupe ab\'elien libre sur les points ferm\'es de $X$. Le 
degr\'e des points ferm\'es sur $k$  induit une application degr\'e $Z^0(X) \to \Z$.
On note $Z^0_{0}(X)$ son noyau.
On note $CH_{0}(X)$ le groupe de Chow des z\'ero-cycles
de degr\'e z\'ero  modulo l'\'equivalence rationnelle et $A_{0}(X) \subset CH_{0}(X)$ le sous-groupe  des classes
de z\'ero-cycles de degr\'e z\'ero.

Soit $X$ une $k$-vari\'et\'e projective et lisse g\'eom\'etriquement connexe. Soit $\X=X\times_k\k$.
Soit $\Alb_{X}= \Alb_{X/k}$ la vari\'et\'e d'Alba\-nese de $X$. C'est une vari\'et\'e ab\'elienne sur $k$.
 Il y a un torseur $E:=\Alb^1_{X}$
sous $ \Alb_{X}$ et un  $k$-morphisme naturel $\phi: X \to E$.
La vari\'et\'e d'Albanese de $E$ s'identifie \`a $A$.
On consultera \cite{S59},  \cite[Thm. 3.3]{Gr62}, \cite{Kl05}, \cite{W08}.

Soit $K$ un corps alg\'ebriquement clos.
Soit  $C/K$ une courbe connexe, projective, lisse. Soit $ J_{C} : = \Pic^0_{C/K}$ la jacobienne de $C$.
C'est une vari\'et\'e ab\'elienne. On a un morphisme $C \to J_{C}$ associ\'e au choix d'un point $m$ de $C(K)$.
Il induit un isomorphisme $A_{0}(C) \simeq J_{C}(K)$, qui ne d\'epend pas du choix de $m$.
 Soit $A/K$ une vari\'et\'e ab\'elienne.
Soit    $f : C \to A$
un $K$-morphisme envoyant le point $m \in C(K)$ sur le point $0 \in A(K)$.
Cette application se factorise :
$$C \to J_{C} \to A,$$ avec $J_{C} \to A$
un homomorphisme de vari\'et\'es ab\'eliennes. 
On obtient ainsi un homomorphisme $A_{0}(C) \to A(K)$
ind\'ependant du choix de $m$.
 Soit $E/K$ un torseur sous $A$.
\`{A} tout z\'ero-cycle  $\sum_{i}n_{i}P_{i} $  de degr\'e z\'ero sur $E$, avec $n_{i} \in \Z$
et $P_{i}\in E(K)$,
on associe le point  $\sum'_{i} n_{i}P_{i}  \in A(K)$, o\`u la somme
est prise via la structure de torseur de $E$ sous $A$.  
Pour $X/k$ une $k$-vari\'et\'e propre lisse g\'eom\'etriquement int\`egre,
prenant $K=\k$, ceci induit
 un homomorphisme  Galois-\'equivariant
$$  Z^0_{0}(\X) \to Z^0_{0}({\overline E}) \to A(\k) =\Alb_{X}(\k).$$
La d\'efinition de l'\'equivalence rationnelle et le cas des courbes
montre que cet homomorphisme induit un   homomorphisme Galois-\'equivariant
$$A_{0}(\X) \to  \Alb_{X}(\k).$$
On en d\'eduit un homomorphisme
$$A_{0}(X) \to A_{0}(\X)^G \to\Alb_{X}(\k)^G =\Alb_{X}(k)$$

Cet homomorphisme est fonctoriel en le corps $k$
et fonctoriel covariant en la $k$-vari\'et\'e $X$.

\begin{prop}\label{roitman}
(1) L'homomorphisme 
$A_{0}(X) \to A_{0}(\X)^G$
a noyau et conoyau
de torsion.

(2) (Roitman) Le noyau de  l'homomorphisme  surjectif $A_{0}(\X) \to\Alb_{X}(\k)$
est uniquement divisible.

(3) L'homomorphisme $A_{0}(\X) \to \Alb_{X}(\k)$ induit un
  homomorphisme surjectif
$A_{0}(\X)^G \to \Alb(\X)^G=\Alb_{X}(k)$
\`a   noyau uniquement divisible.

(4) Le conoyau de l'homomorphisme compos\'e
$$A_{0}(X) \to  A_{0}(\X)^G \to  \Alb_{X}({\overline k})^G=\Alb_{X}(k)$$
est de torsion. 

(5) (B. Kahn) ll existe un entier $n(X)$ tel que pour tout corps $F$
contenant $k$, le conoyau de $A_{0}(X_{F}) \to \Alb_{X}(F)$
soit annul\'e par $n(X)$.

\end{prop}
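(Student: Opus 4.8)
Write $A=\Alb_{X}$ and, for any field $F\supseteq k$, let $\psi_{F}\colon A_{0}(X_{F})\to A(F)$ denote the Albanese homomorphism of the introduction. The plan is to obtain (1) from a transfer argument, to deduce (2)--(4) from Roitman's theorem together with the vanishing of the Galois cohomology of uniquely divisible modules, and to prove the uniform bound (5) by spreading out the relation satisfied by the generic point of $A$. For (1) I would invoke, for a finite Galois extension $k'/k$, the proper pushforward $\mathrm{cor}_{k'/k}\colon A_{0}(X_{k'})\to A_{0}(X)$ and the flat pullback $\mathrm{res}_{k'/k}$, which satisfy $\mathrm{cor}_{k'/k}\circ\mathrm{res}_{k'/k}=[k':k]$ and $\mathrm{res}_{k'/k}\circ\mathrm{cor}_{k'/k}=\sum_{\sigma\in\Gal(k'/k)}\sigma$. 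If $\beta\in A_{0}(X)$ vanishes in $A_{0}(\X)$ it vanishes already over some finite $k'/k$, whence $[k':k]\beta=\mathrm{cor}_{k'/k}\mathrm{res}_{k'/k}(\beta)=0$; if $\alpha\in A_{0}(\X)^{G}$ is represented over a finite Galois $k'/k$ and fixed by $\Gal(k'/k)$, then $\mathrm{cor}_{k'/k}(\alpha)$ maps to $\mathrm{res}_{k'/k}\mathrm{cor}_{k'/k}(\alpha)=[k':k]\alpha$. Hence both kernel and cokernel are torsion.

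I take Roitman's theorem as given: over $\k$ the torsion of $A_{0}(\X)$ maps isomorphically onto that of $A(\k)$, so the kernel $T$ of the surjection $A_{0}(\X)\to A(\k)$ is torsion free; as $A_{0}(\X)$ is divisible, writing $t=na$ and correcting $a$ by the $n$-torsion element of $A_{0}(\X)$ lifting the image of $a$ (which is $n$-torsion in $A(\k)$) shows $T$ is divisible, hence a $\Q$-vector space, proving (2). A uniquely divisible discrete $G$-module has $H^{i}(G,-)=0$ for $i\ge 1$, its cohomology being simultaneously torsion and uniquely divisible; so the cohomology sequence of $0\to T\to A_{0}(\X)\to A(\k)\to 0$ yields a surjection $A_{0}(\X)^{G}\to A(\k)^{G}=A(k)$ with kernel $T^{G}$, again a $\Q$-vector space, proving (3). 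Part (4) is then formal: the second map of the composite is onto by (3) and the first has torsion cokernel by (1).

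The substance is (5). Applying (4) over $F_{0}:=k(A)$ to the tautological point $\eta\in A(F_{0})$ (the generic point of $A$), there are an integer $m\ge 1$ and a class $z\in A_{0}(X_{F_{0}})$ with $\psi_{F_{0}}(z)=m\eta$. I would spread $z$ out to a family of zero-cycles $\mathcal Z$ on $X\times_{k}U\to U$, of fibrewise degree $0$, over a dense open $U\subseteq A$, with generic fibre $z$. Using the algebraicity of the Albanese map in families (the symmetric-product, or norm, description of $u\mapsto\psi(\mathcal Z_{u})$), the two $k$-morphisms $u\mapsto\psi(\mathcal Z_{u})$ and $u\mapsto m\,u$ from $U$ to $A$ agree at the generic point, hence on all of $U$ by separatedness. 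Therefore $\psi_{F}(\mathcal Z_{u})=m\,u$ for every $F\supseteq k$ and every $u\in U(F)$, so $m\cdot U(F)\subseteq\im\psi_{F}$.

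It remains to cover the points of $A(F)$ lying outside $U$, and this is the step I expect to be the main obstacle. One cannot simply translate $P\in A(F)$ into $U$ by an auxiliary $F$-point, since a sparse $A(F)$ may miss every dense open. Instead I would take the closure $W:=\overline{\mathcal Z}$ in $X\times_{k}A$, a correspondence, and define $w_{P}:=P^{!}(W_{F})\in A_{0}(X_{F})$ for every $P\in A(F)$ by refined Gysin pullback along the regular embedding $P\colon\Spec F\hookrightarrow A_{F}$; its degree is $0$ because the fibrewise degree of $W$ is $0$. The required identity $\psi_{F}(w_{P})=mP$, valid for all $F$ and all $P$, should then follow by pulling back the relation $\psi_{F_{0}}(z)=m\eta$ along $P^{!}$, using the compatibility of the Albanese map with Gysin maps and specialization; this compatibility, namely controlling the fibres of $W$ over non-flat points without introducing any degree factor, is the delicate point. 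Granting it, $n(X)=m$ annihilates the cokernel of $\psi_{F}$ for every $F\supseteq k$.
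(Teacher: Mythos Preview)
Your argument is correct and follows the paper's line: parts (1)--(4) match exactly, and for (5) both you and the paper apply (4) to the generic point $\eta\in\Alb_X(E)$ over $E=k(\Alb_X)$ and then specialise to an arbitrary $F$-point. The paper says no more than \emph{sp\'ecialisation} and points to Proposition~\ref{zerocycleuniv} for a detailed treatment of the analogous statement over~$\C$, where the group-law translation trick you rightly discard becomes available because $U(F)\supseteq U(\C)\neq\emptyset$. Your refined Gysin set-up is valid but heavier than necessary: as $\Alb_X$ is smooth, iterated DVR specialisation along a regular system of parameters at the image $p$ of $P$ in $\Alb_X$ gives maps on $A_0$ that commute with proper pushforward along $X\to\Alb^1_X$, hence with the Albanese homomorphism, and base change along $\kappa(p)\hookrightarrow F$ then finishes; this dissolves the compatibility issue you flag.
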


\begin{proof}
L'\'enonc\'e (1) r\'esulte de l'\'enonc\'e analogue pour
la fl\`eche $$A_{0}(X) \to A_{0}(X_{K})^{\Gal(K/k)}$$ pour
$K/k$ fini galoisien. Ce dernier se voit en utilisant les propri\'et\'es de
l'application norme $A_{0}(X_{K}) \to A_{0}(X)$.
L'\'enonc\'e (2) est un th\'eor\`eme de Roitman \cite{R80}  selon lequel l'application
$A_{0}(\X) \to\Alb_{X}(\k)$  identifie 
 la torsion de  $A_{0}(\X)$ avec la torsion de $ \Alb_{X}(\k)$.
On a donc une suite exacte
$$0 \to V \to A_{0}(\X) \to \Alb_{X}(\k) \to 0$$
avec $V$ un $\Q$-vectoriel, donc satisfaisant $H^1(G,V)=0$.
La suite exacte de cohomologie galoisienne donne
l'\'enonc\'e (3). L'\'enonc\'e (4) suit de (1) et (3).

L'\'enonc\'e (5) est \'etabli par Bruno Kahn \cite[Prop. A.1]{K21}.
Indiquons  l'id\'ee de la d\'emonstration. 
On consid\`ere le corps $E=k(\Alb_{X})$.
Soit $\eta \in \Alb_{X}(E)$ le point g\'en\'erique.
D'apr\`es l'\'enonc\'e (4), il existe un entier $N=n(X)$
et un z\'ero-cycle $z$ de degr\'e z\'ero dans $Z_{0}(X_{E})$
tel que $N(\eta-0_{E})$ soit image de $z$.
Tout point de $\Alb_{X}(F)$ s'obtient par 
sp\'ecialisation \`a partir de $\eta \in \Alb_{X}(E)$.
Ceci donne le r\'esultat. \footnote{Pour une d\'emonstration plus d\'etaill\'ee
d'un r\'esultat semblable, voir la proposition \ref{zerocycleuniv} ci-dessous.}
 \end{proof}

Sous l'hypoth\`ese que  la $k$-vari\'et\'e $X$ poss\`ede un point $k$-rationnel,
ou du moins poss\`ede un z\'ero-cycle de degr\'e 1, on peut se poser les questions suivantes :

(a) L'homomorphisme $A_{0}(X) \to A_{0}(\X)^G$ est-il surjectif ?

(b) Pour tout corps $L$ contenant $k$, l'homomorphisme
$A_{0}(X_{L}) \to A_{0}(X_{\overline L})^{G_{L}}$ est-il surjectif ?

(c)  Pour tout corps $L$ contenant $k$, l'homomorphisme
$A_{0}(X_{L}) \to \Alb_{X}(L)$
est-il surjectif ?

\bigskip

Dans ce texte, nous rassemblons des r\'esultats divers sur ces probl\`emes.

On donne des contre-exemples \`a la surjectivit\'e parmi les vari\'et\'es
de Severi-Brauer au-dessus d'une courbe, en particulier
sur des corps ``arithm\'etiques'', comme les corps $p$-adiques ou
le corps des r\'eels.

On s'int\'eresse par ailleurs au
cas o\`u le corps de base $k$ est le corps $\C$ des complexes
et $L$ varie parmi les corps de fonctions de vari\'et\'es sur $\C$.
On consid\`ere tout particuli\`erement le cas des solides\footnote{Un ``solide''  est une vari\'et\'e int\`egre de dimension 3} $X/\C$
qui sont rationnellement connexes (th\'eor\`eme  \ref{equivgenerales})
et des hypersurfaces cubiques de dimension 3 (th\'eor\`eme \ref{equivcubiques}).

\section{Fibrations}

 \begin{lemma} \label{isoalb}
Soit  $f: X \to B$
 un $k$-morphisme dominant  de $k$-vari\'et\'es projectives, lisses,
 g\'eom\'etriquement int\`egres, \`a fibre g\'en\'erique
 g\'eom\'etriquement int\`egre. Soit $Z/\k(B)$  la fibre g\'en\'erique de $\X \to \B$. 
 Supposons que  le groupe $\Pic(Z)$ 
 est de type fini.  Alors  la fl\`eche $f^* : \Pic^0_{B/k} \to \Pic^0_{X/k}$ est un isomorphisme,
 et la fl\`eche $f_{*} : \Alb_{X} \to \Alb_{B}$ est un isomorphisme.
 \end{lemma}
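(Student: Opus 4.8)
The plan is to prove the statement about $f^*$ on $\Pic^0$ and then deduce the statement about $f_*$ on $\Alb$ by duality. The abelian varieties $\Alb_X$ and $\Pic^0_{X/k}$ are dual, and under this duality the covariant homomorphism $f_*\colon\Alb_X\to\Alb_B$ is dual to the contravariant homomorphism $f^*\colon\Pic^0_{B/k}\to\Pic^0_{X/k}$; hence one is an isomorphism if and only if the other is. Since the formation of $\Pic^0$ commutes with the extension $k\subseteq\k$, and since in characteristic zero a homomorphism of abelian varieties is an isomorphism as soon as it is bijective on $\k$-points, I may assume $k=\k$ and argue at the level of the groups of line bundles $\Pic(X)$, $\Pic(B)$ and their subgroups $\Pic^0$ of classes algebraically equivalent to zero.

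First I would record injectivity. As the generic fibre is geometrically integral, one has $H^0(X_\eta,\O)=k(B)$, so Stein factorisation over the normal base $B$ gives $f_*\O_X=\O_B$; the projection formula then yields $f_*f^*L=L$, whence $f^*\colon\Pic(B)\to\Pic(X)$ is injective, and in particular so is $f^*\colon\Pic^0(B)\to\Pic^0(X)$, pullback preserving algebraic equivalence. The key input is the finiteness of the cokernel of $f^*$ on full Picard groups. Restriction to the generic fibre $Z$ gives a surjection $\Pic(X)\to\Pic(Z)$ whose kernel $V$ is generated by the classes of the irreducible components of the fibres $X_b$ over the codimension-one points $b$ of $B$: a line bundle trivial on $Z$ is trivial on some $f^{-1}(U)$, hence (using smoothness of $X$ and a dimension count) is represented by a divisor supported on fibres over codimension-one points. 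Now $f^*\Pic(B)\subseteq V$, and for all but finitely many $b$ the fibre $X_b$ is integral (generic smoothness in characteristic zero), so its unique component is $X_b=f^*[b]\in f^*\Pic(B)$; thus $V/f^*\Pic(B)$ is finitely generated. Combined with the hypothesis that $\Pic(Z)$ is finitely generated, the exact sequence
$$0\to V/f^*\Pic(B)\to \Pic(X)/f^*\Pic(B)\to \Pic(Z)\to 0$$
shows that $Q:=\Pic(X)/f^*\Pic(B)$ is finitely generated.

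To conclude, I would apply the snake lemma to $f^*$ between the two exact sequences $0\to\Pic^0\to\Pic\to\NS\to0$ for $B$ and for $X$. The middle vertical map is injective with cokernel $Q$, so $C:=\Coker\big(f^*\colon\Pic^0(B)\to\Pic^0(X)\big)$ is an extension of a subgroup of $Q$ by $\Ker(f^*\colon\NS(B)\to\NS(X))\subseteq\NS(B)$; as both of these are finitely generated, $C$ is finitely generated. But $\Pic^0(X)$ is the group of $\k$-points of an abelian variety over an algebraically closed field, hence divisible, so its quotient $C$ is divisible as well; a finitely generated divisible abelian group is zero, whence $C=0$. Therefore $f^*\colon\Pic^0(B)\to\Pic^0(X)$ is bijective, hence an isomorphism of abelian varieties, and dualising gives the isomorphism $f_*\colon\Alb_X\to\Alb_B$.

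The main obstacle I anticipate is the bookkeeping in the third paragraph, namely the precise identification of $\Ker(\Pic(X)\to\Pic(Z))$ with the group $V$ of vertical divisor classes and the verification that $V/f^*\Pic(B)$ is finitely generated, together with stating cleanly the duality that converts the $\Pic^0$ statement into the $\Alb$ statement; once $Q$ is known to be finitely generated, the final ``divisible and finitely generated implies zero'' step is immediate.
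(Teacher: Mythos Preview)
Your argument is correct and follows essentially the same route as the paper: reduce to $k=\k$, establish injectivity of $f^*$ on $\Pic$ from $f_*\O_X=\O_B$, show that $\Pic(X)/f^*\Pic(B)$ is finitely generated via the vertical-divisor analysis (the paper packages this as a diagram comparing $\Pic$ over $B$ and over a good open $U\subset B$, but this is the same content), and then conclude that $\Coker(f^*\colon\Pic^0(B)\to\Pic^0(X))$ is both finitely generated and divisible, hence zero. Your proof is in fact slightly more explicit than the paper's, which leaves the snake-lemma step and the $\Pic^0$/$\Alb$ duality implicit.
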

{\begin{proof}  Les arguments sont bien connus. 
 Pour $f : X \to B$ morphisme propre de vari\'et\'es int\`egres \`a fibre g\'en\'erique
 g\'eom\'etriquement int\`egre,
  la fl\`eche ${\mathcal O}_{B}^{\times} \to f_{*} {\mathcal O}_{X}^{\times}$ est un isomorphisme,
  donc la fl\`eche  $\Pic(B) \to \Pic(X)$ est injective.  
  Pour \'etablir l'\'enonc\'e, on peut supposer que $k$ est alg\'ebriquement clos.
Il existe un ouvert non vide $U \subset B$   tel que le morphisme induit $X_{U} \to U$
soit lisse \`a fibres int\`egres.  On a le diagramme commutatif de suites exactes
	  \[
\begin{tikzcd}
0 \arrow[r] & K_{1} \arrow[r] \arrow[d] &   \Pic(B)  \arrow[r] \arrow[d] &   \Pic(U) \arrow[r] \arrow[d] & 0 \\
0 \arrow[r] &  K_{2} \arrow[r] & \Pic(X) \arrow[r]  & \Pic(X_{U}) \arrow[r] & 0. \end{tikzcd}
	\]	
Le groupe $K_{2}$ est de type fini. 
Le conoyau de $\Pic(U) \to \Pic( X_{U})$ est $\Pic(Z)$.
Le  groupe $\Pic(X)/\im(\Pic(B)$ est une extension de $\Pic(Z)$
 par un quotient de  $K_{2}$, donc de type fini. Comme $\Pic(Z)$ est de type fini,
 on conclut que $\Pic(X)/\im(\Pic(B)$ est de type fini.
 On a les suites exactes compatibles
  \[
\begin{tikzcd}
0 \arrow[r] & \Pic^0_{B/k}(k) \arrow[r] \arrow[d] &   \Pic(B)  \arrow[r] \arrow[d] &   \NS(B) \arrow[r] \arrow[d] & 0 \\
0 \arrow[r] & \Pic^0_{X/k}(k) \arrow[r] & \Pic(X) \arrow[r]  & \NS(X)  \arrow[r] & 0, \end{tikzcd}
	\]		
 o\`u les groupes de N\'eron-Severi sont de type fini.
On en d\'eduit que le conoyau de la fl\`eche injective
$$\Pic^0_{B/k}(k) \to \Pic^0_{X/k}(k)$$
est un groupe de type fini. Mais c'est un groupe divisible.
Il est donc nul.  Ainsi  $f^* : \Pic^0_{B/k} \to \Pic^0_{X/k}$ est un isomorphisme.

  \end{proof}

 \begin{comment}
Montrons que c'est un isomorphisme.
 On trouve un ouvert $U\subset B$ contenant les points de codimension 1 de $B$
 et un morphisme fini \'etale $V \to U$ tel que $X\times_{B}V \to X\times_{B}U$
 admette une section. On en d\'eduit que $\Ker[\Pic(B) \to \Pic(X)]$ est de type fini.
 Donc $\Pic^0_{B/k} \to \Pic^0_{X/k}$ a un noyau fini.
Toute classe  $z$ dans ce noyau d\'efinit  un \'el\'ement de $H^1_{\text{\'et}}(B,\mu_{n})$
d'image nulle dans $H^1_{\text{\'et}}(X,\mu_{n})$ pour $n>0$ convenable. Comme la fibre g\'en\'erique
est g\'eom\'etriquement int\`egre, on a $z=0$. 
 \end{comment}

\begin{remark}  Si l'on a $H^1(Z,O_{Z})=0$, 
la condition $\Pic(Z)$ de type fini est satisfaite, car le groupe de N\'eron-Severi est
un groupe de type fini.  
 \end{remark}

\begin{prop}\label{ratconsurquelconque}  
 Supposons le corps $k$ alg\'ebriquement clos.
 Soit $f: X \to Y$ un $k$-morphisme de vari\'et\'es
 projectives et lisses connexes.
 Si la fibre g\'en\'erique de $f$ est une vari\'et\'e rationnellement connexe,
 alors l'application $f_{*} : A_{0}(X) \to A_{0}(Y)$
 est un isomorphisme.
\end{prop}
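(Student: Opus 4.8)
Le plan est de scinder l'\'enonc\'e en la surjectivit\'e et l'injectivit\'e de $f_{*}$, en s'appuyant sur trois faits. \emph{(i)} Pour une vari\'et\'e projective, lisse et rationnellement connexe $W$ sur un corps alg\'ebriquement clos, on a $A_{0}(W)=0$~: deux points rationnels y sont rationnellement \'equivalents, et $W$ poss\`ede un point rationnel. \emph{(ii)} Le lemme de d\'eplacement des z\'ero-cycles sur une vari\'et\'e lisse, qui permet de repr\'esenter toute classe par un cycle dont le support \'evite un ferm\'e de codimension $\geq 1$ fix\'e. \emph{(iii)} Le th\'eor\`eme de Graber--Harris--Starr~: en caract\'eristique z\'ero, une fibration au-dessus d'une courbe projective lisse, \`a fibre g\'en\'erique rationnellement connexe, admet une section. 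On notera $U\subseteq Y$ le plus grand ouvert au-dessus duquel $f$ est lisse \`a fibres rationnellement connexes, et $Z=Y\setminus U$, de codimension $\geq 1$.

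Pour la surjectivit\'e, je repr\'esenterais une classe donn\'ee de $A_{0}(Y)$, gr\^ace \`a \emph{(ii)}, par un cycle $\sum_{i}n_{i}[y_{i}]$ de degr\'e z\'ero avec $y_{i}\in U$. Chaque fibre $X_{y_{i}}$ \'etant rationnellement connexe sur le corps alg\'ebriquement clos $k$, elle poss\`ede par \emph{(i)} un point rationnel $x_{i}$, et $f_{*}\bigl(\sum_{i}n_{i}[x_{i}]\bigr)=\sum_{i}n_{i}[y_{i}]$, d'o\`u la surjectivit\'e.

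Pour l'injectivit\'e, soit $\zeta\in A_{0}(X)$ avec $f_{*}\zeta=0$ dans $CH_{0}(Y)$. Quitte \`a d\'eplacer $\zeta$ par \emph{(ii)}, je le supposerais support\'e dans $X_{U}$. J'\'ecrirais alors l'\'equivalence rationnelle $f_{*}\zeta\sim 0$ au moyen de courbes int\`egres $W_{j}\subseteq U$ et de fonctions $r_{j}$~; les adh\'erences $\Gamma_{j}=\overline{W_{j}}\subseteq Y$ ne sont pas contenues dans $Z$, donc leur point g\'en\'erique est dans $U$ et la fibre g\'en\'erique de la famille normalis\'ee $X\times_{Y}\widetilde{\Gamma}_{j}\to\widetilde{\Gamma}_{j}$ est rationnellement connexe. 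Par \emph{(iii)}, je disposerais d'une section $\sigma_{j}:\widetilde{\Gamma}_{j}\to X$, d'image une courbe $\Gamma'_{j}\subset X$ envoy\'ee birationnellement sur $\Gamma_{j}$~; en voyant $r_{j}$ comme fonction sur $\Gamma'_{j}$, le cycle $w=\sum_{j}\operatorname{div}_{\Gamma'_{j}}(r_{j})$ est rationnellement \'equivalent \`a z\'ero sur $X$ et rel\`eve l'\'equivalence rationnelle de la base. Le cycle $\zeta-w$, rationnellement \'equivalent \`a $\zeta$, a une image directe qui s'annule au-dessus de $U$~; sa partie au-dessus de $U$ est donc un z\'ero-cycle de degr\'e z\'ero fibre \`a fibre, rationnellement \'equivalent \`a z\'ero dans chaque fibre projective lisse rationnellement connexe $X_{y}\subset X$ par \emph{(i)}. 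Le probl\`eme se ram\`ene ainsi \`a l'\'enonc\'e suivant~: un z\'ero-cycle $\beta$ port\'e par les fibres sp\'eciales $X_{y}$, $y\in Z$ (en nombre fini), et d'image directe rationnellement triviale sur $Y$, est rationnellement \'equivalent \`a z\'ero dans $X$.

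C'est cet \'enonc\'e r\'esiduel qui serait la difficult\'e principale, car les fibres $X_{y}$ avec $y\in Z$ ne sont pas suppos\'ees rationnellement connexes et le seul argument de d\'eplacement fait tourner en rond (d\'eplacer $\beta$ hors de $X_{Z}$ ram\`ene au cas bon, lequel reproduit un cycle port\'e par $X_{Z}$). Je le l\`everais par un argument de d\'eg\'en\'erescence~: les courbes rationnelles pr\'esentes dans les fibres g\'en\'erales, lorsqu'on sp\'ecialise le long des courbes-sections vers un point de $Z$, d\'eg\'en\`erent, par propret\'e de l'espace des courbes rationnelles stables, en des cha\^ines de courbes rationnelles dans la fibre sp\'eciale~; la sp\'ecialisation de l'\'equivalence rationnelle dans l'espace total lisse $X$ garde donc rationnellement triviaux les z\'ero-cycles concern\'es. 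On conclurait que $\beta\sim 0$, puis que $\zeta\sim 0$, ce qui \'etablirait l'injectivit\'e et ach\`everait la preuve.
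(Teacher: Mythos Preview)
Votre preuve de la surjectivit\'e est correcte. En revanche, l'argument d'injectivit\'e comporte une lacune r\'eelle, pr\'ecis\'ement au point que vous identifiez comme la difficult\'e principale. L'``argument de d\'eg\'en\'erescence'' propos\'e n'est pas une preuve : les points du cycle r\'esiduel $\beta$ au-dessus d'un $z\in Z$ proviennent de sections $\sigma_j$ au-dessus de courbes $\widetilde{\Gamma}_j$ \emph{diff\'erentes}, et en se d\'epla\c{c}ant le long de ces courbes on aboutit dans des fibres \emph{diff\'erentes} de $f$, qu'aucune courbe rationnelle de fibre ne relie. Pour r\'eparer cela il faudrait, pour chaque $z$, choisir une unique courbe lisse $C\subset Y$ par $z$ non contenue dans $Z$, trouver des sections de $X\times_Y\widetilde{C}\to\widetilde{C}$ passant par chacun des points de $\beta_z$ (forme forte de Graber--Harris--Starr, qui exige que ces points soient lisses dans le produit fibr\'e $X\times_Y C$, ce qui n'est nullement garanti puisque $f$ n'est pas lisse en $z$), puis justifier la sp\'ecialisation de l'\'equivalence rationnelle le long de $C$. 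Rien de cela n'est fourni, et le premier point est d\'elicat.

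La preuve de l'article \'evite enti\`erement cette difficult\'e par une strat\'egie indirecte. On prend des sections hyperplanes g\'en\'erales $i:Z\hookrightarrow X$ telles que $p=f\circ i:Z\to Y$ soit g\'en\'eriquement fini de degr\'e $n$, et l'on pose $\theta_Z=i_*\circ p^*:A_0(Y)\to A_0(X)$. On montre que $f_*\circ\theta_Z=n\cdot\mathrm{id}$ et que le conoyau de $\theta_Z$ est annul\'e par $n$ ; il s'ensuit que $\ker f_*$ est annul\'e par $n^2$. Par ailleurs le lemme~\ref{isoalb} donne $\Alb_X\simeq\Alb_Y$, donc $\ker f_*$ est contenu dans le noyau de $A_0(X)\to\Alb_X(k)$, qui est uniquement divisible par Roitman ; un sous-groupe de torsion d'un groupe uniquement divisible est nul. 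Cette approche n'utilise ni Graber--Harris--Starr ni aucune analyse des fibres sp\'eciales.
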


\begin{proof}  (Voisin) Par sections hyperplanes g\'en\'erales on peut trouver un plongement ferm\'e
$i : Z \subset X$ projectif et lisse tel que la projection 
$p : Z \to Y$ obtenue par composition $Z \subset X \to Y$ soit g\'en\'eriquement finie.
Soit $n\geq 1$ son degr\'e. Soit $U \subset Y$ ouvert non vide tel que $p^{-1}(U) \to U$ soit fini \'etale
et que les fibres de $X_{U} \to U$ soient lisses. Elles sont alors rationnellement connexes.
Tout z\'ero-cycle sur $X$ est rationnellement \'equivalent \`a un z\'ero-cycle \`a support
dans $X_{U}$. Comme tous les points ferm\'es des fibres $X_{y}$, $y\in U(k),$ sont
R-\'equivalents, on en conclut que
l'application  $i_{*} : CH_{0}(Z) \to CH_{0}(X)$ est surjective. 

On a l'application $p^* : CH_{0}(Y) \to CH_{0}(Z)$, 
et l'application compos\'ee  
$$\phi =i_{*}   \circ p^* : CH_{0}(Y) \to CH_{0}(Z) \to CH_{0}(X).$$
Pour $P \in Z_{y}, y \in U(k)$, le z\'ero-cycle $nP$ est rationnellement \'equivalent sur $X_{y}$
au z\'ero-cycle $p^{-1}(y)$. En utilisant le lemme de d\'eplacement, on conclut que
l'application $\phi$ a son conoyau annul\'e par $n$.

Par ailleurs le compos\'e de  $\phi$ avec la projection $CH_{0}(X) \to CH_{0}(Y)$
co\"{i}ncide avec le compos\'e de $CH_{0}(Y) \to CH_{0}(Z) \to CH_{0}(Y)$,
qui est la multiplication par $n$. Donc le noyau de $\phi$ est annul\'e par $n$.

On obtient ainsi  un homomorphisme   $\theta_{Z} : A_{0}(Y) \to A_{0}(X)$ (d\'ependant du choix de $Z \subset X$)
tel que   la composition
$$A_{0}(Y) \to A_{0}(X) \to A_{0}(Y)$$
soit la multiplication par $n$ et que le conoyau de $\theta_{Z} : A_{0}(Y) \to A_{0}(X)$ soit
annul\'e par $n$. On en  conclut que la fl\`eche surjective $f_{*} : A_{0}(X) \to A_{0}(Y)$
est un isomorphisme apr\`es tensorisation par $\Q$.

Plus pr\'ecis\'ement, soit $z$ dans le noyau de $f_{*}: A_{0}(X) \to A_{0}(Y)$.
On sait que l'on a $nz=\theta_{Z}(\rho)$ avec $\rho \in A_{0}(Y)$.
Alors $\rho$ est dans le noyau de $ n: A_{0}(Y) \to A_{0}(Y)$.
Donc $n\rho=0 \in A_{0}(Y)$. Donc $n^2z=\theta_{Z}(n\rho)=0 \in A_{0}(X)$.

D'apr\`es le lemme \ref{isoalb}, la fl\`eche $\Alb_{X}  \to \Alb_{B}$ est un isomorphisme.
Le th\'eor\`eme de Roitman assure que le noyau des applications (fonctorielles)  surjectives
$A_{0}(X) \to \Alb_{X}(k)$ et $A_{0}(Y) \to \Alb_{Y}(k)$ est un $\Q$-vectoriel.
On conclut que le noyau de l'application $A_{0}(X) \to A_{0}(Y)$ est un espace vectoriel sur $\Q$.
Comme ce noyau est de torsion, il est nul. \end{proof}

\begin{remark}  Des cas particuliers \'el\'ementaires 
de la proposition \ref{ratconsurquelconque} 
suffisent dans la plupart des exemples donn\'es plus loin. 
 Claire Voisin (6 octobre 2022)  m'a indiqu\'e  la d\'emonstration ci-dessus.
Des variantes de ce r\'esultat sont  en fait d\'ej\`a dans la litt\'erature,
avec des hypoth\`eses plus g\'en\'erales sur la fibre g\'en\'erique g\'eom\'etrique
(d\'ecomposition rationnelle de la diagonale).
Olivier Wittenberg m'indique ainsi que le th\'eor\`eme ci-dessus r\'esulte   de \cite[Lemme 2.3]{W12}. Bruno Kahn me signale le th\'eor\`eme \cite[Thm. 1.3]{Vial15} de Ch. Vial
et  aussi son r\'esultat \cite[Cor. 6.8 a)]{K18}.
\end{remark}

\begin{prop}\label{propprincip}
Soient  
 $B$ et $X$ des   $k$-vari\'et\'es projectives, lisses, g\'eom\'etriquement connexes, et
 $f: X \to B$  un $k$-morphisme \`a fibre g\'eom\'etrique g\'en\'erique  
 une vari\'et\'e rationnellement connexe.

 (a) Le morphisme $f$ induit le
  diagramme commutatif suivant :

	$$\xymatrix{
	A_{0}(X) \ar[r]  \ar[d]_{f_{*}}  &   A_{0}(\X)^G  \ar[r] \ar[d]^{\simeq}_{f_{*}} &  \Alb_{X}(k)  \ar[d]^{\simeq}_{f_{*}}  \\
	A_{0}(B) \ar[r]  &  A_{0}(\B)^G \ar[r] &  \Alb_{B}(k) 
	}$$
	 o\`u  les deux fl\`eches horizontales de droite sont  des fl\`eches surjectives  \`a noyau uniquement divisible.
 
  (b) Supposons que $A_{0}(\B) \to \Alb_{B}({\overline k})$
  est un isomorphisme, alors  
  $A_{0}(\X) \to  \Alb_{X}({\overline k})$
  est un isomorphisme.

(c)  Supposons de plus  que   $A_{0}(B) \to A_{0}(\B)^G$
est injectif.  
Si  $$A_{0}(X) \to  A_{0}(\X)^G \simeq \Alb_{X}(k)$$
  est surjectif, alors
 $f_{*} : A_{0}(X) \to A_{0}(B)$ est surjectif.

(d) Supposons de plus  que $A_{0}(B) \to A_{0}(\B)^G$ est un isomorphisme.
 Si  $$f_{*} : A_{0}(X) \to A_{0}(B)$$ est surjectif, alors 
 $A_{0}(X) \to  A_{0}(\X)^G$ est surjectif.

   \end{prop}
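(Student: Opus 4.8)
The plan is to build the diagram of part (a) out of the three results established above, and then to read off parts (b)--(d) by chasing elements in that diagram.

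For (a), I would first check that Lemma~\ref{isoalb} applies to $f$. The geometric generic fibre $Z$ is rationally connected, hence geometrically integral with $H^1(Z,\O_Z)=0$; by the remark following Lemma~\ref{isoalb} the group $\Pic(Z)$ is then finitely generated. Lemma~\ref{isoalb} yields that $f_*\colon \Alb_X \to \Alb_B$ is an isomorphism, giving the rightmost vertical arrow $\Alb_X(k)\xrightarrow{\simeq}\Alb_B(k)$. Working over the algebraically closed field $\k$, Proposition~\ref{ratconsurquelconque} applied to $\X \to \B$ shows that $f_*\colon A_0(\X)\to A_0(\B)$ is an isomorphism; this map is $G$-equivariant, so passing to $G$-invariants gives the middle vertical isomorphism $f_*\colon A_0(\X)^G\xrightarrow{\simeq}A_0(\B)^G$. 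The two right horizontal arrows are surjective with uniquely divisible kernel by Proposition~\ref{roitman}(3), and the commutativity of both squares expresses the covariant functoriality in $X$ of the comparison maps $A_0(X)\to A_0(\X)^G$ and of the Albanese morphism.

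Part (b) is then immediate: over $\k$ one has the square with top $A_0(\X)\to\Alb_X(\k)$, bottom $A_0(\B)\to\Alb_B(\k)$, and vertical isomorphisms $f_*$ coming from Proposition~\ref{ratconsurquelconque} and Lemma~\ref{isoalb}. If the bottom arrow is an isomorphism, then so is the top one.

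Parts (c) and (d) are diagram chases in the diagram of (a), where I denote by $g_X\colon A_0(X)\to A_0(\X)^G$ and $g_B\colon A_0(B)\to A_0(\B)^G$ the two left horizontal arrows and recall that the middle vertical $f_*$ is an isomorphism. For (c), take $b\in A_0(B)$; set $\bar b=g_B(b)$ and let $\bar a\in A_0(\X)^G$ be its preimage under the middle isomorphism. Surjectivity of $g_X$ gives $a\in A_0(X)$ with $g_X(a)=\bar a$, and then $g_B(f_*(a))=f_*(g_X(a))=f_*(\bar a)=\bar b=g_B(b)$; since $g_B$ is injective by hypothesis, $f_*(a)=b$, so $f_*$ is surjective. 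For (d), take $\bar a\in A_0(\X)^G$ and set $\bar b=f_*(\bar a)$; as $g_B$ is now an isomorphism there is $b\in A_0(B)$ with $g_B(b)=\bar b$, and surjectivity of $f_*$ gives $a\in A_0(X)$ with $f_*(a)=b$. Then $f_*(g_X(a))=g_B(f_*(a))=g_B(b)=\bar b=f_*(\bar a)$, and since the middle vertical $f_*$ is injective, $g_X(a)=\bar a$; hence $g_X$ is surjective. The substantive work is concentrated entirely in part (a); once the middle and right verticals are isomorphisms, parts (b)--(d) are formal. The one place demanding genuine care is checking that Proposition~\ref{ratconsurquelconque} may be invoked over $\k$ and that the resulting isomorphism on $A_0$ is $G$-equivariant, so that taking $G$-invariants preserves it — which is exactly where the rational connectedness of the generic fibre is used.
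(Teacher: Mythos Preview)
Your proof is correct and follows precisely the approach the paper indicates: the paper's own proof is a single sentence citing Proposition~\ref{roitman}, Lemme~\ref{isoalb}, and Proposition~\ref{ratconsurquelconque}, and you have spelled out exactly how these three ingredients assemble into the diagram of~(a) and how (b)--(d) then follow by elementary diagram chases. The only point worth adding is that the ``$\simeq \Alb_X(k)$'' in the statement of~(c) is justified by the standing hypothesis of~(b), though as you correctly observe your chase for~(c) and~(d) does not actually require it.
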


\begin{proof} On utilise la proposition  \ref{roitman}, le lemme \ref{isoalb} et la proposition  \ref{ratconsurquelconque}.
\end{proof}

\begin{prop}\label{severibcourbe}
Soient 
 $C$ une $k$-courbe projective, lisse, g\'eom\'etriquement connexe, et
 $f: X \to C$  un $C$-sch\'ema de Severi-Brauer.
 Soit $\alpha \in \Br(C)$ la classe associ\'ee. 
 S'il existe un z\'ero-cycle $z$ de degr\'e z\'ero sur $C$ tel que $ \alpha(z) \neq 0 \in \Br(k)$,
   alors $ A_{0}(X ) \to A_{0}(\X)^G \simeq \Alb_{X}(k)$ n'est pas surjectif.
   
\begin{comment}
   (b) Si $X(k)\neq \emptyset$ et $\alpha \neq 0 \in \Br(C)$, alors
   $A_{0}(X_{k(C)}) \to \Alb_{X}(k(C))$ n'est pas surjectif.
  \end{comment}
   \end{prop}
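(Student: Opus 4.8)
The plan is to deduce the statement from Proposition~\ref{propprincip} applied to the fibration $f:X\to C$, the failure of surjectivity being detected by evaluating the Brauer class $\alpha$ on zero-cycles. First, since a Severi-Brauer variety splits over an algebraic closure, the geometric generic fibre of $f$ is a projective space, in particular rationally connected; Proposition~\ref{propprincip} therefore applies with $B=C$, and the two right-hand vertical maps in the diagram of part~(a) are isomorphisms. Next, for the curve $C$ the Hochschild--Serre exact sequence $0\to \Pic(C)\to \Pic(\overline{C})^G\to \Br(k)$ (using Hilbert~90) shows that $A_{0}(C)\to A_{0}(\overline{C})^G$ is injective, so the hypothesis of part~(c) holds. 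By the contrapositive of~(c), it suffices to prove that $f_{*}:A_{0}(X)\to A_{0}(C)$ is \emph{not} surjective.

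To this end I would use the evaluation map $\alpha:CH_{0}(C)\to \Br(k)$ sending a closed point $P$ to $\mathrm{cores}_{\kappa(P)/k}(\alpha(P))$, where $\kappa(P)$ is the residue field and $\alpha(P)\in \Br(\kappa(P))$ is the restriction of $\alpha$; this is well defined on rational equivalence classes by the reciprocity law for the Brauer group of $C$ (the sum over closed points of the corestricted residues $\mathrm{cores}_{\kappa(P)/k}\,\partial_{P}(\alpha\cup(g))$ vanishes, whence $\alpha(\mathrm{div}(g))=0$). The crucial point is that $\alpha$ kills the image of $f_{*}$. Indeed, let $Q$ be a closed point of $X$ over $P=f(Q)$, with $\kappa(Q)\supseteq\kappa(P)$. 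The fibre $X_{P}$ is the Severi-Brauer variety attached to $\alpha(P)$, and $Q$ furnishes a $\kappa(Q)$-point of it, which forces $\mathrm{res}_{\kappa(Q)/\kappa(P)}\alpha(P)=0$. Applying $\mathrm{cores}_{\kappa(Q)/k}$ and using $\mathrm{cores}_{\kappa(Q)/\kappa(P)}\circ \mathrm{res}_{\kappa(Q)/\kappa(P)}=[\kappa(Q):\kappa(P)]$ together with transitivity of corestriction, one obtains $\alpha(f_{*}Q)=[\kappa(Q):\kappa(P)]\,\mathrm{cores}_{\kappa(P)/k}\alpha(P)=0$. Hence $\alpha\circ f_{*}=0$ on $Z^{0}(X)$, and a fortiori on $A_{0}(X)$.

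To finish, the hypothesis provides a zero-cycle $z$ of degree zero with $\alpha(z)\neq 0$ in $\Br(k)$; its class $[z]\in A_{0}(C)$ then satisfies $\alpha([z])\neq 0$, while $\alpha$ annihilates every element of $\mathrm{im}(f_{*})$. Therefore $[z]\notin \mathrm{im}(f_{*})$, so $f_{*}:A_{0}(X)\to A_{0}(C)$ is not surjective, and the desired non-surjectivity of $A_{0}(X)\to A_{0}(\X)^G\simeq \Alb_{X}(k)$ follows from part~(c).

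The conceptual heart, and the step I expect to require the most care, is the vanishing $\alpha\circ f_{*}=0$: it is exactly here that the defining property of Severi-Brauer varieties (a rational point over a field splits the class there) is converted, via the projection formula for corestriction, into an identity of Brauer classes. The only other ingredient that is standard but non-formal is the well-definedness of the evaluation map on $CH_{0}(C)$, i.e.\ the reciprocity law on the curve; everything else reduces to Proposition~\ref{propprincip} and Hilbert~90.
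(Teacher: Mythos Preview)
Your proof is correct and follows essentially the same route as the paper: both verify the hypothesis of Proposition~\ref{propprincip}(c) for the curve $C$ and then show that $f_{*}:A_{0}(X)\to A_{0}(C)$ is not surjective by pairing with~$\alpha$. The paper phrases the key vanishing more succinctly via the projection formula $\langle f_{*}w,\alpha\rangle_{C}=\langle w,f^{*}\alpha\rangle_{X}=0$ (using that $f^{*}\alpha=0\in\Br(X)$ for the Severi--Brauer scheme), whereas you unpack the same identity pointwise through restriction--corestriction; the content is identical.
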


\begin{proof}
Pour la courbe $C$, les fl\`eches
$A_{0}(C) \to \Alb_{C}(k)$
et $A_{0}(C) \to A_{0}({\overline C})^G$ sont injectives,
et l'\'enonc\'e analogue vaut sur tout corps contenant $k$.

On utilise la fonctorialit\'e, 
sur les
vari\'et\'es projectives,
de l'accouplement
entre le groupe de Chow des z\'ero-cycles et le groupe de Brauer.

Si $z=f_{*}(w)$, alors  $$ \alpha(z)= <z,\alpha>_{C}=<f_{*}(w),\alpha>_{C}= <w,f^*(\alpha)_{X}> = 0 \in \Br(k),$$
puisque $f^*(\alpha)=0 \in \Br(X)$. Donc $f_{*}: A_{0}(X) \to A_{0}(C)$ n'est pas surjectif.
La proposition \ref{propprincip} (c) donne le r\'esultat.
\end{proof}

\begin{comment}
Pour (b), soit $m\in X(k)$ et $n=f(m)\in C(k)$ et soit $\eta$ le point g\'en\'erique de $C$.
Alors 
$$<\alpha,n>=<\alpha, f_{*}(m)>= <f^*(\alpha),m>=0 \in \Br(k)$$
et $<\alpha,\eta> \in \Br(k(C))$ est l'image $\alpha \neq 0$ par la restriction
$\Br(C) \to \Br(k(C))$, qui est injective car $C$ est lisse. Donc $$<\alpha, \eta - n> = <\alpha, \eta>  \neq 0 \in \Br(k(C))$$
et on applique (a) sur le corps $k(C)$.
\end{comment}

 \begin{remark}   On peut formuler un \'enonc\'e sur une base $B$ de dimension quelconque.
 Soient  $B$ une $k$-vari\'et\'e projective, lisse, g\'eom\'e\-triquement connexe, et
 $f: X \to B$  un $B$-sch\'ema de Severi-Brauer.
 Soit $\alpha \in \Br(B)$ la classe associ\'ee.

  Soit  $F=k(B)$.
   Supposons    que  $A_{0}(B_{{\overline F}}) \to \Alb_{B}({\overline F})$
est un isomorphisme. 
 Supposons $X(k) \neq \emptyset$ et $\alpha \in \Br(B)$ non nul.
 Alors $A_{0}(X_{F}) \to A_{0}(B_{F})$
n'est pas surjectif comme on voit en appliquant $\alpha$ au point g\'en\'erique de $B$,
et on a au moins l'une des propri\'et\'es suivantes :

(i) $A_{0}(B_{F}) \to   \Alb_{B}(F)$ n'est pas injectif.

(ii) $A_{0}(X_{F}) \to  \Alb_{X}(F)$ n'est pas surjectif.
   \end{remark}

\begin{prop}\label{equivseveribrauer}
Soit $C$ une $k$-courbe projective, lisse, g\'eom\'e\-triquement connexe.
Soit $f : X \to C$ un sch\'ema de Severi-Brauer   dont la
classe associ\'ee  $\alpha \in \Br(C) \subset \Br(k(C))$ n'est pas nulle,
ce qui \'equivaut \`a dire que $f$ n'a pas de section.
Supposons $X(k) \neq \emptyset$.
Soit $F$ un corps contenant $k$.

Dans chacun des cas suivants, l'application 
 $ A_{0}(X_{F} ) \to A_{0}(\X_{F})^{G_{F}} \simeq \Alb_{X_{F}}(F)$
 n'est pas surjective.

(i) L'application $f_{F }: X(F) \to  C(F)$ n'est pas surjective.

(ii) L'application d'\'evaluation $$ev_{\alpha} : A_{0}(C_{F}) \to \Br(F)$$
n'est pas nulle.

(iii) On a  $F=k(C)$ et la classe $\alpha \in \Br(C)$  n'est pas dans l'image de $\Br(k)$.
\end{prop}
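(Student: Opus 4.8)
The plan is to reduce cases (i) and (iii) to case (ii), and to observe that case (ii) is exactly Proposition \ref{severibcourbe} read over the base field $F$. Indeed, applied to the curve $C_{F}$, to the Severi-Brauer scheme $X_{F} \to C_{F}$ and to the class $\alpha_{F} \in \Br(C_{F})$, that proposition says: if some degree-zero cycle $z \in A_{0}(C_{F})$ satisfies $\alpha(z) \neq 0 \in \Br(F)$ — that is, if $ev_{\alpha} \neq 0$ — then $A_{0}(X_{F}) \to A_{0}(\X_{F})^{G_{F}} \simeq \Alb_{X_{F}}(F)$ is not surjective. Its mechanism is worth recalling: were $z = f_{*}(w)$, the projection formula for the pairing between zero-cycles and the Brauer group would give $\alpha(z) = \langle f_{*}(w), \alpha\rangle_{C_{F}} = \langle w, f^{*}\alpha\rangle_{X_{F}} = 0$, since a Severi-Brauer scheme kills its own class, $f^{*}\alpha = 0 \in \Br(X_{F})$; hence $z \notin \im f_{*}$ and $f_{*} : A_{0}(X_{F}) \to A_{0}(C_{F})$ is not surjective, whereupon Proposition \ref{propprincip} (c) yields the claim. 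The two hypotheses of \ref{propprincip} (c) hold over any $F$ for the curve $C$: the map $A_{0}(\overline{C}_{F}) \to \Alb_{C}(\overline{F})$ is the classical Jacobian isomorphism and $A_{0}(C_{F}) \to A_{0}(\overline{C}_{F})^{G_{F}}$ is injective. It thus suffices to check that (i) and (iii) each force $ev_{\alpha} \neq 0$.

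For (i), recall that for $P \in C(F)$ the fibre $X_{P}$ is the Severi-Brauer $F$-variety of class $\alpha(P) \in \Br(F)$, so $P$ lies in the image of $f_{F} : X(F) \to C(F)$ if and only if $\alpha(P) = 0$. Non-surjectivity of $f_{F}$ therefore produces a point $P \in C(F)$ with $\alpha(P) \neq 0$. Since $X(k) \subseteq X(F)$ is nonempty, pick $m \in X(k)$ and set $Q = f(m) \in C(F)$; functoriality of evaluation gives $\alpha(Q) = (f^{*}\alpha)(m) = 0$. Then the degree-zero cycle $P - Q$ satisfies $ev_{\alpha}(P - Q) = \alpha(P) - \alpha(Q) = \alpha(P) \neq 0$, so $ev_{\alpha} \neq 0$ and we are reduced to case (ii).

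For (iii), take $F = k(C)$ and let $\delta \in C(F)$ be the tautological point attached to the generic point of $C$; evaluating the pullback of $\alpha$ at $\delta$ returns the image of $\alpha$ under the restriction $\Br(C) \hookrightarrow \Br(k(C)) = \Br(F)$, which is injective because $C$ is smooth. Choosing any $n_{0} \in C(k)$, nonempty because $X(k) \neq \emptyset$, one finds $ev_{\alpha}(\delta - n_{0}) = \big(\alpha - \alpha(n_{0})_{C}\big)_{k(C)}$, where $\alpha(n_{0})_{C}$ is the constant class pulled back from $\alpha(n_{0}) \in \Br(k)$. By hypothesis $\alpha \notin \im(\Br(k) \to \Br(C))$, whereas $\alpha(n_{0})_{C}$ does lie in that image; hence $\alpha - \alpha(n_{0})_{C} \neq 0$ in $\Br(C)$, and by the injectivity just recalled its image in $\Br(F)$ is nonzero. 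Thus again $ev_{\alpha} \neq 0$, and case (ii) applies.

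The argument is essentially a bookkeeping of evaluations, so the only points needing care are the projection formula $\langle f_{*}w, \alpha\rangle = \langle w, f^{*}\alpha\rangle$ together with $f^{*}\alpha = 0$, the identification in (iii) of evaluation at the generic point with the restriction $\Br(C) \hookrightarrow \Br(k(C))$, and the verification that the two curve-theoretic hypotheses of Proposition \ref{propprincip} (c) survive passage to the arbitrary extension $F$. None of these is a genuine obstacle; the real content is the already-established implication that $f_{*}$ failing to be surjective forces the Albanese map to fail to be surjective.
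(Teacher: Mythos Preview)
Your proof is correct and follows essentially the same route as the paper: reduce (i) and (iii) to (ii), which is Proposition~\ref{severibcourbe} over $F$, by exhibiting a degree-zero cycle on which $\alpha$ does not vanish. The only cosmetic difference is in (iii), where the paper takes $n_{0}$ to be the image of a point of $X(k)$ (so that $\alpha(n_{0})=0$ directly), whereas you allow an arbitrary $n_{0}\in C(k)$ and invoke the hypothesis $\alpha\notin\im(\Br(k))$; both work.
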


\begin{proof}  L'image de $f_{F }: X(F) \to  C(F)$ est exactement l'ensemble des points
 $P \in C(F)$  avec $\alpha(P) =0 \in \Br(F)$.
Si $X(k) \neq \emptyset$ et on a l'hypoth\`ese (i), alors $\alpha$ prend au moins
deux valeurs diff\'erentes sur $C(F)$, et (ii) est satisfait. On applique la proposition
\ref{severibcourbe} sur le corps $F$.
 Sous l'hypoth\`ese (iii), la classe $\alpha$ s'annule sur l'image d'un point de $X(k)$
 dans $C(k) \subset C(F)$
 et ne s'annule pas au point g\'en\'erique de $C$.  On applique la proposition
\ref{severibcourbe} sur le corps $F=k(C)$.
\end{proof}

\begin{cor} Soient $k$ un corps $p$-adique et $C/k$ une $k$-courbe connexe projective
et lisse de genre au moins 1 avec $C(k)\neq \emptyset$.
Il existe alors un sch\'ema de Severi-Brauer $X \to C$
tel que l'application
 $ A_{0}(X) \to A_{0}(\X)^{G} \simeq \Alb_{X}(k)$
 n'est pas surjective.
 \end{cor}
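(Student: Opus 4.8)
The plan is to apply Proposition~\ref{equivseveribrauer} with $F=k$ through its hypothesis (ii). Concretely, I would produce a class $\alpha\in\Br(C)$ such that the evaluation map $ev_\alpha\colon A_0(C)\to\Br(k)$ is nonzero while the associated Severi--Brauer scheme $X\to C$ satisfies $X(k)\neq\emptyset$; granting this, $\alpha$ is automatically nonzero in $\Br(C)\subset\Br(k(C))$, and Proposition~\ref{equivseveribrauer}(ii) immediately gives the non-surjectivity of $A_0(X)\to A_0(\X)^G\simeq\Alb_X(k)$. Since $C(k)\neq\emptyset$, we have $A_0(C)=\Pic^0(C)=J_C(k)$ with $J_C=\Pic^0_{C/k}$; as $C$ has genus $\geq 1$, the group $J_C(k)$ is a nontrivial (indeed infinite) compact group. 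So the whole difficulty is concentrated in exhibiting one class $\alpha$ whose evaluation pairing with $A_0(C)$ is not identically zero.

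To locate such an $\alpha$, I would first describe $\Br(C)$. Because $\Br(\overline{C})=0$ (Tsen) and $C$ carries a rational point, the Hochschild--Serre spectral sequence yields a (split) exact sequence
$$0\to\Br(k)\to\Br(C)\to H^1(k,J_C)\to 0,$$
where I use $H^1(k,\Z)=0$ to identify $H^1(k,\Pic(\overline{C}))$ with $H^1(k,J_C)$. The image of $\Br(k)$ consists of the constant classes, and these pair trivially with $A_0(C)$, since the evaluation of a constant class $\beta$ on a degree-zero cycle $z$ is $(\deg z)\cdot\beta=0$. Hence the evaluation pairing factors through
$$A_0(C)\times H^1(k,J_C)\longrightarrow\Br(k)=\Q/\Z.$$
The key input is that this induced pairing coincides with the Tate local duality pairing between $J_C(k)$ and $H^1(k,J_C^\vee)=H^1(k,J_C)$ (using the autoduality of the Jacobian), which is perfect: this is Lichtenbaum's duality theorem for curves over $p$-adic fields. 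I expect the identification of the Brauer evaluation pairing with Tate--Lichtenbaum duality to be the main technical point; the rest is formal.

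Given perfectness, since $J_C(k)\neq 0$ there is a class $\xi\in H^1(k,J_C)$ pairing nontrivially with some element of $A_0(C)$; lifting $\xi$ to $\alpha\in\Br(C)$ gives $ev_\alpha\neq 0$ on $A_0(C)$. It remains to secure $X(k)\neq\emptyset$. Fixing $P_0\in C(k)$, I replace $\alpha$ by $\alpha-\alpha(P_0)$, subtracting the pullback of the constant $\alpha(P_0)\in\Br(k)$; this does not alter the image of $\alpha$ in $H^1(k,J_C)$, hence leaves $ev_\alpha$ on $A_0(C)$ unchanged, while now $\alpha(P_0)=0$, so the fibre $X_{P_0}$ has a $k$-point and $X(k)\neq\emptyset$. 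Finally, $\alpha\in\Br(C)$ is the class of an Azumaya algebra (the cohomological and Azumaya Brauer groups agree on a smooth curve), so it defines a genuine Severi--Brauer scheme $X\to C$, and $ev_\alpha\neq 0$ lets me conclude by Proposition~\ref{equivseveribrauer}(ii).
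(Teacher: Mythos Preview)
Your proof is correct and follows essentially the same route as the paper: both arguments use Lichtenbaum's identification of the evaluation pairing $A_0(C)\times\Br(C)/\Br(k)\to\Q/\Z$ with Tate's local duality for the Jacobian, together with the nontriviality of $J_C(k)$ for $g\geq 1$, to produce a class $\alpha\in\Br(C)$ with $ev_\alpha\neq 0$ on $A_0(C)$. The only difference is that the paper invokes Proposition~\ref{severibcourbe} directly (which does not require $X(k)\neq\emptyset$), whereas you go through Proposition~\ref{equivseveribrauer}(ii) and therefore add the harmless normalization $\alpha\mapsto\alpha-\alpha(P_0)$ to force $X(k)\neq\emptyset$; this extra step is not needed for the bare statement but does yield the slightly stronger conclusion that the counterexample $X$ can be chosen with a $k$-point.
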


\begin{proof} Notons $J/k$ la jacobienne de la courbe $C$.
Si $k$ est un corps $p$-adique, on a la dualit\'e parfaite
de Tate 
$$J(k) \times H^1(k,J) \to \Q/\Z$$
(groupe compact, groupe discret).
D'apr\`es  Lichtenbaum \cite{Li69} ceci se r\'e\'ecrit
comme une dualit\'e parfaite
$$ A_{0}(C) \times \Br(C)/\Br(k) \to \Q/\Z.$$
Le groupe $A_{0}(C)=J(k)$  n'est pas nul, car ce groupe compact contient
un sous-groupe ouvert isomorphe \`a $O_{k}^g$, o\`u $O_{k}$ est l'anneau des entiers de $k$
et $g$ est le genre de la courbe $C$, qui est la dimension de $J$. La dualit\'e
donne donc l'existence d'une classe $\alpha\in \Br(C)$ qui ne s'annule
pas sur $A_{0}(C)$. On applique alors la proposition 
\ref{severibcourbe}.
\end{proof}

\begin{cor} Soit $C/\R$ une $\R$-courbe
connexe projective
et lisse telle que  l'espace topologique $C(\R)$ poss\`ede au moins deux composantes
connexes. 
Il existe alors un sch\'ema de Severi-Brauer $X \to C$
tel que l'application
 $ A_{0}(X) \to A_{0}(\X)^{G} \simeq \Alb_{X}(\R)$
 n'est pas surjective.
  \end{cor}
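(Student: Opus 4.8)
The plan is to deduce the corollary from Proposition~\ref{severibcourbe}. It suffices to produce a class $\alpha \in \Br(C)$ and a zero-cycle $z$ of degree $0$ on $C$ with $\alpha(z) \neq 0 \in \Br(\R) = \Z/2$; one then takes $X \to C$ to be the Severi--Brauer scheme attached to $\alpha$. Since $C(\R)$ has at least two connected components, I would fix real points $P$ and $Q$ on two distinct components $C_1$ and $C_2$ and set $z = Q - P$, a cycle of degree $0$. For any Azumaya class $\alpha \in \Br(C)$ the real evaluation map $R \mapsto \alpha(R)$ from $C(\R)$ to $\Br(\R) = \Z/2$ is locally constant, hence constant on each component; writing $\alpha(C_i)$ for its value on $C_i$, one gets $\alpha(z) = \alpha(C_2) - \alpha(C_1)$. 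So the whole problem reduces to finding a class $\alpha \in \Br(C)$ that takes different values on $C_1$ and $C_2$.

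To make such a class concrete I would look for a quaternion (conic-bundle) class $\alpha = (-1, g)$ with $g \in \R(C)^{\times}$, so that $X \to C$ is a conic bundle. The first requirement is that $\alpha$ be unramified, i.e. actually lie in $\Br(C)$: the residue of $(-1, g)$ at a closed point $v$ is the class of $(-1)^{v(g)}$ in $\kappa(v)^{\times}/\kappa(v)^{\times 2}$. At a complex closed point $\kappa(v) = \C$ contains $\sqrt{-1}$, so this residue vanishes automatically; at a real closed point $\kappa(v) = \R$ it vanishes exactly when $v(g)$ is even. Hence it is enough to choose $g$ of even order at every real closed point of $C$. For such a $g$ the sign of $g$ is constant along each oval, and since $(-1,a)$ is the nontrivial element of $\Br(\R)$ precisely when $a < 0$, one has $\alpha(C_i) = 0$ when $g > 0$ on $C_i$ and $\alpha(C_i) \neq 0$ when $g < 0$ on $C_i$. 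Thus it suffices to find $g \in \R(C)^{\times}$ of even order at all real points with $\on{sgn}(g|_{C_1}) \neq \on{sgn}(g|_{C_2})$.

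I expect the construction of this $g$ --- equivalently, the existence of any class in $\Br(C)$ separating two real components --- to be the one substantial point, and the only place the hypothesis $\#\pi_0(C(\R)) \geq 2$ is genuinely used; everything else is formal. Intrinsically, what is needed is that the real-evaluation homomorphism $\Br(C) \to \bigoplus_{\pi_0(C(\R))} \Br(\R) = (\Z/2)^{n}$ does not land in the diagonal, i.e. separates two components. This is a classical feature of real curves, closely tied to the Gross--Harris description $\pi_0(\Pic^0_{C}(\R)) \cong (\Z/2)^{n-1}$, by which the class $[P - Q]$ of two points on distinct ovals lies in a non-identity component of $\Pic^0_{C}(\R)$. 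As a check that the recipe is right, on the elliptic curve $C : y^2 = x^3 - x$ over $\R$, whose real locus is the union of the two ovals $\{x \in [-1,0]\}$ and $\{x \geq 1\} \cup \{\infty\}$, the function $g = x$ already works: it has a double zero at $(0,0)$ and a double pole at $\infty$, hence even order at all real points so that $(-1,x) \in \Br(C)$, and it is negative on the first oval and positive on the second. Granting such a separating class, Proposition~\ref{severibcourbe} applied with $z = Q - P$ gives the asserted non-surjectivity.
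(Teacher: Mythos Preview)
Your approach is exactly the paper's: pick $P$, $Q$ on distinct real ovals, set $z=Q-P$, find $\alpha\in\Br(C)$ with $\alpha(P)\neq\alpha(Q)$, and invoke Proposition~\ref{severibcourbe}. The one ingredient you flag as granted---the existence of a Brauer class separating two real components---is precisely Witt's result \cite[Behauptung~II]{W34}, which the paper cites at this point; with that reference (and your quaternion recipe $(-1,g)$ is in fact how Witt's argument proceeds) your proof is complete.
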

  
\begin{proof} 
   D'apr\`es Witt \cite[Behauptung II]{W34} pour $P, Q\in C(\R)$ dans deux composantes connexes distinctes,  
 il existe $\alpha \in \Br(C)$ tel que $\alpha(P)\neq \alpha(Q) \in \Br(\R)$.
 On applique la proposition \ref{severibcourbe}.
 \end{proof}

\begin{rmk}  
Par approximation, on peut donner des exemples sur
 un corps de nombres.  Plus simplement, soit $C/\Q$
 la courbe elliptique projective et lisse   des rationnels d\'efinie par l'\'equation affine
 $$y^2=x(x-1)(x+1).$$
 Soit $\Q(C)$ son corps des fonctions.
 La classe de l'alg\`ebre de quaternions $(x,-1) \in \Br(\Q(C))$ est une classe $\alpha\in \Br(C)$.
 Soit $X \to C$ un  sch\'ema de Severi-Brauer de classe $\alpha$. La classe $\alpha$  prend la valeur $(1,-1)=0 \in \Br(\Q)$ en le point $P \in C(\Q)$ 
 donn\'e par $(x,y)=(1,0)$
 et la valeur $(-1,-1) \neq 0 \in \Br(\Q)$ en le point $Q \in C(\Q)$ donn\'e par $(x,y)=(-1,0)$.
 On a  $\alpha(P) \neq \alpha(Q) \in \Br(\Q)$, comme on voit  d\'ej\`a dans $\Br(\R)=\Z/2$.
 Une application directe de la proposition \ref{equivseveribrauer} (ii) donne que l'application
 $A_{0}(X) \to \Alb_{X}(\Q)$ n'est pas surjective.
 \end{rmk}

 \begin{prop}
Soit $f: X \to Y$ un morphisme de $\R$-vari\'et\'es
 projectives, lisses,
\`a fibre g\'en\'erique   g\'eom\'etrique une vari\'et\'e rationnellement connexe.
Supposons $X(\R)\neq \emptyset$. Supposons que  l'application induite sur les composantes
connexes $\pi_{0}(X(\R)) \to \pi_{0}(Y(\R))$ n'est pas surjective. Alors :

(i) L'application $f_{*} : A_{0}(X)/2 \to A_{0}(Y)/2$ n'est pas surjective.

(ii) Si $Y$ est une courbe
alors l'application $A_{0}(X) \to \Alb_{X}(\R)$ n'est pas surjective.
 \end{prop}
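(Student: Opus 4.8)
The plan is to exploit the mod $2$ invariant of a zero-cycle given by counting its real points in each connected component of the real locus. For a smooth projective $\R$-variety $W$, set $M_{W}=\bigoplus_{c\in\pi_{0}(W(\R))}\Z/2$ and define
\[ \beta_{W}\colon CH_{0}(W)\longrightarrow M_{W} \]
by sending a closed point with residue field $\R$, lying in the component $c$, to $e_{c}$, and a closed point with residue field $\C$ to $0$. The crucial first step is that $\beta_{W}$ is well defined on rational equivalence. Writing a rational equivalence as a sum of terms $\nu_{*}(\mathrm{div}(g))$, with $\nu\colon C\to W$ finite, $C$ a smooth projective curve, and $g\in\R(C)^{\times}$, one reduces to the component-wise real-point count of $\nu_{*}(\mathrm{div}(g))$. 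A closed point of $C$ with residue field $\C$ contributes $0$ (its image is non-real, or real with even multiplicity $[\C:\R]=2$), so this count in the component $\nu(\gamma)$ of $W(\R)$ reduces to $\sum_{P\in\gamma}\mathrm{ord}_{P}(g)$ over the relevant circles $\gamma\subseteq C(\R)$, and the key fact is that on each such circle
\[ \sum_{P\in\gamma}\mathrm{ord}_{P}(g)\equiv 0\pmod 2, \]
because $g$ changes sign an even number of times around $\gamma$ and $P$ is a sign change exactly when $\mathrm{ord}_{P}(g)$ is odd. This well-definedness (in the spirit of Colliot-Th\'el\`ene et Ischebeck) is the main technical ingredient.

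Next I would record that the square
\[ \begin{CD} CH_{0}(X) @>{\beta_{X}}>> M_{X} \\ @V{f_{*}}VV @VV{f_{*}^{\mathrm{top}}}V \\ CH_{0}(Y) @>{\beta_{Y}}>> M_{Y} \end{CD} \]
commutes, where $f_{*}^{\mathrm{top}}(e_{c})=e_{f(c)}$: a real point $P$ maps to the real point $f(P)$ with multiplicity $[\kappa(P):\kappa(f(P))]=1$ and stays in the component $f(c)$, while a closed point with residue field $\C$ contributes $0$ on both sides. By hypothesis $\pi_{0}(f)$ is not surjective, so choose $d_{0}\in\pi_{0}(Y(\R))$ outside its image and $d_{1}$ in its image (nonempty since $X(\R)\neq\emptyset$), together with real points $P_{0}\in d_{0}$ and $P_{1}\in d_{1}$. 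Then $z:=P_{0}-P_{1}\in A_{0}(Y)$ and $\beta_{Y}(z)=e_{d_{0}}+e_{d_{1}}$ has nonzero $d_{0}$-coordinate. But the image of $f_{*}^{\mathrm{top}}$ is supported on $\im(\pi_{0}(f))$, hence has zero $d_{0}$-coordinate, and by the square so does $\beta_{Y}(f_{*}(A_{0}(X)))$. Therefore the class of $z$ in $A_{0}(Y)/2$ is not in the image of $f_{*}$, which proves (i). (The rational connectedness of the fibres plays no role here.)

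For (ii), assume $Y$ is a curve. Since $X(\R)\neq\emptyset$ we have $Y(\R)\neq\emptyset$, and $X$, $Y$ are geometrically connected. For the curve $Y$ the map $A_{0}(Y)\to A_{0}(\overline{Y})^{G}$ is injective, and $A_{0}(\overline{Y})\to\Alb_{Y}(\C)$ is an isomorphism (the Jacobian of the complex curve $\overline{Y}$); together with the rational connectedness of the fibres, this places us in the setting of Proposition \ref{propprincip}(c) with base $B=Y$. By part (i) the induced map $A_{0}(X)/2\to A_{0}(Y)/2$ is not surjective, hence $f_{*}\colon A_{0}(X)\to A_{0}(Y)$ is not surjective. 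Reading Proposition \ref{propprincip}(c) in contrapositive form then yields that
\[ A_{0}(X)\to A_{0}(\X)^{G}\simeq\Alb_{X}(\R) \]
is not surjective, which is (ii).

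The main obstacle is the first step, namely the well-definedness of $\beta_{W}$ on rational equivalence: the sign-change count on the real circles $C(\R)$. Once this invariant is in hand, both parts follow formally, (i) from the functoriality of $\beta$ under $f_{*}$ and the failure of surjectivity of $\pi_{0}(f)$, and (ii) from the structural isomorphisms of Proposition \ref{propprincip}.
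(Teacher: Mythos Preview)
Your proposal is correct and follows essentially the same approach as the paper: the paper invokes the isomorphism $CH_{0}(W)/2\simeq(\Z/2)^{\pi_{0}(W(\R))}$ of \cite{CTI81}, whose underlying map is precisely your $\beta_{W}$, to obtain (i), and then appeals to Proposition~\ref{propprincip}(c) for (ii), exactly as you do. Your write-up is simply more explicit, sketching the well-definedness of $\beta_{W}$ and spelling out the curve-case hypotheses needed for Proposition~\ref{propprincip}(c).
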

\begin{proof} D'apr\`es \cite{CTI81}, on a un isomorphisme naturel
 $$CH_{0}(X)/2 \simeq (\Z/2)^{\pi_{0}(X)}.$$ Ceci \'etablit (i).
 L'\'enonc\'e (ii) r\'esulte alors de la proposition \ref{propprincip} (c).
 \end{proof}

\begin{remark}
Lorsque $k$ est un corps fini $\F$, la th\'eorie du corps de classes sup\'erieur
(K.~Kato et S. Saito) montre que pour toute $\F$-vari\'et\'e projective et lisse
g\'eom\'e\-tri\-quement connexe $X$, l'application
$A_{0}(X) \to \Alb_{X}(\F)$ est surjective, l'application
$A_{0}(\X)  \to \Alb_{X}({\overline {\F}})$ est   un isomorphisme,
et l'application $A_{0}(X)\to A_{0}(\X)^{G_{\F}}$ est surjective.

 Lorsque $k$ est un corps de nombres, 
 Bloch et Beilinson ont conjectur\'e 
que l'application $A_{0}(\X)  \to \Alb_{X}({\overline k})$ est un isomorphisme :
 c'est une cons\'equence facile de  \cite[p.~121, Conjecture]{Bl84}.
 
\end{remark}

\section{Vari\'et\'es complexes avec un z\'ero-cycle universel}

La notion de z\'ero-cycle universel pour une vari\'et\'e projective
et lisse sur les complexes est introduite par C. Voisin dans  \cite{V24a} et \cite{V24b}.
 On traduit certains des \'enonc\'es  de \cite{V24a}
 dans le langage des groupes de Chow de z\'ero-cycles sur des corps arbitraires.

Soient  $X /{\mathbb C} $  une vari\'et\'e projective et lisse, $m \in X({\mathbb C})$
et $\Alb_{X}/{\mathbb C}$ la vari\'et\'e ab\'elienne d'Albanese.
Soit $f : X \to \Alb_{X}$ le morphisme  d'Albanese
envoyant $m$ sur $0 \in \Alb_{X}({\mathbb C})$.
Comme rappel\'e \`a la proposition \ref{roitman}, ceci induit un homomorphisme surjectif
$$A_{0}(X) \to \Alb_{X}(\C)$$ 
induisant un isomorphisme sur les groupes de torsion 
 (Roitman \cite{R80}).

\begin{thm}\label{representable} (Roitman \cite{R72})
Avec les notations ci-dessus, les conditions  suivantes sont \'equi\-valentes :

(a)  Il existe un entier $d>0$ tel que l'application 
$$X(\C)^d \times X(\C)^d \to  A_{0}(X)$$
envoyant $(x_{1}, \dots, x_{d}); (y_{1}, \dots, y_{d})$ sur la classe
de $x_{1}+ \dots +x_{d} -y_{1} - \dots - y_{d}$ 
soit surjective.

(b)
La fl\`eche surjective  
$$ A_{0}(X) \to  \Alb_{X}({\mathbb C})$$
est un isomorphisme.

(c)
  Il existe une courbe $\Gamma/\C$ connexe, projective et lisse et un morphisme $\Gamma \to X$
tels que l'application induite $A_{0}(\Gamma) \to A_{0}(X)$ est surjective.

(d) Pour tout corps alg\'ebriquement clos $\Omega$
contenant  ${\mathbb C} $, la fl\`eche surjective 
$$ A_{0}(X_{\Omega}) \to  \Alb_{X}(\Omega)$$
est un isomorphisme.  
\end{thm}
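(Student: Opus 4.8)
The plan is to prove the cyclic chain of implications $(a) \Rightarrow (c) \Rightarrow (b) \Rightarrow (a)$, and then separately $(b) \Leftrightarrow (d)$. Each of the first three implications is natural given the tools available; the subtle point is the base-change equivalence with $(d)$.

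For $(a) \Rightarrow (c)$: if the map from $X(\C)^d \times X(\C)^d$ to $A_0(X)$ is surjective, I would first reduce to producing a single curve carrying all these cycles. The set of $2d$-tuples is parametrized by $X^{2d}$, and the idea is to find an irreducible curve $\Gamma$ mapping to $X$ that meets enough fibers. Concretely, using Bertini-type arguments one takes general complete intersection curves on $X$ and on suitable incidence varieties; the point is that a single sufficiently general curve $\Gamma \to X$ of high enough degree has $A_0(\Gamma) \to A_0(X)$ surjective because every difference $x_1 + \cdots + x_d - y_1 - \cdots - y_d$ can be realized by points lying on $\Gamma$ after moving them in rational equivalence. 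This is essentially a Lefschetz/Bertini argument combined with the surjectivity hypothesis.

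For $(c) \Rightarrow (b)$: suppose $A_0(\Gamma) \to A_0(X)$ is surjective for some smooth projective curve $\Gamma$. Since $A_0(\Gamma) \simeq J_\Gamma(\C)$ is a divisible group that is the group of $\C$-points of an abelian variety, its image in $A_0(X)$ is a divisible subgroup surjecting onto $A_0(X)$, hence $A_0(X)$ is divisible. The key is then to show the kernel of $A_0(X) \to \Alb_X(\C)$ is trivial. By Roitman (Proposition \ref{roitman}(2)) this kernel $V$ is uniquely divisible, i.e.\ a $\Q$-vector space. The curve factors as $\Gamma \to J_\Gamma \to \Alb_X$, so the composite $A_0(\Gamma) \to A_0(X) \to \Alb_X(\C)$ is induced by a homomorphism of abelian varieties $J_\Gamma \to \Alb_X$ which must be surjective (its image is a sub-abelian-variety whose $\C$-points surject onto $\Alb_X(\C)$). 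Now a surjection of abelian varieties $J_\Gamma \to \Alb_X$ splits up to isogeny, so one can lift: the composite $A_0(\Gamma) \to A_0(X)$ has image hitting all of $A_0(X)$, yet it factors through $J_\Gamma(\C)$ which maps isomorphically-up-to-isogeny to $\Alb_X(\C)$. This forces $V=0$, giving $(b)$.

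For $(b) \Rightarrow (a)$: if $A_0(X) \to \Alb_X(\C)$ is an isomorphism, then since $\Alb_X(\C)$ is generated by the image of $X$ under the Albanese map, finitely many differences of points suffice, giving the desired $d$. Finally, for $(b) \Leftrightarrow (d)$, the implication $(d) \Rightarrow (b)$ is trivial (take $\Omega = \C$). The real content is $(b) \Rightarrow (d)$: I would use a specialization/spreading-out argument. Given $(b)$ over $\C$, one uses that the isomorphism $A_0(X) \simeq \Alb_X(\C)$ is witnessed by algebraic data (the curve $\Gamma$ and the splitting) defined over a finitely generated subfield, and base-changes to $\Omega$. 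The hardest step is controlling the kernel over the larger algebraically closed field $\Omega$: one must show $A_0(X_\Omega) \to \Alb_X(\Omega)$ stays injective, which again reduces via Roitman to showing its kernel, a priori a $\Q$-vector space over $\Omega$, vanishes. \emph{I expect the main obstacle to be this last step}, the persistence of injectivity under extension of the algebraically closed base field; the clean way is to invoke that condition $(c)$ (the existence of a dominating curve) is geometric and insensitive to the choice of algebraically closed ground field, so once $(c)$ holds over $\C$ it holds over $\Omega$, and then $(c) \Rightarrow (b)$ applied over $\Omega$ yields $(d)$.
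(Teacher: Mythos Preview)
Your cycle $(a)\Rightarrow(c)\Rightarrow(b)\Rightarrow(a)$ runs in the wrong direction, and both of the first two implications have genuine gaps. The paper proves instead $(a)\Rightarrow(b)\Rightarrow(c)\Rightarrow(a)$: the implication $(a)\Rightarrow(b)$ is precisely Roitman's theorem \cite{R72}, invoked as a black box; then $(b)\Rightarrow(c)$ is elementary (take a smooth complete-intersection curve $\Gamma\subset X$, use Lefschetz to get $\Alb_\Gamma\twoheadrightarrow\Alb_X$, and conclude since $A_0(\Gamma)\simeq\Alb_\Gamma(\C)$ and $A_0(X)\simeq\Alb_X(\C)$ by hypothesis); and $(c)\Rightarrow(a)$ is immediate from Riemann--Roch on $\Gamma$.

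Your $(a)\Rightarrow(c)$ is circular. You say ``every difference $x_1+\cdots+x_d-y_1-\cdots-y_d$ can be realized by points lying on $\Gamma$ after moving them in rational equivalence'', but moving a $0$-cycle on $X$ by rational equivalence so that it lands on a fixed curve $\Gamma$ is exactly the surjectivity of $A_0(\Gamma)\to A_0(X)$ you are trying to prove. A Bertini argument alone does not give this.

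Your $(c)\Rightarrow(b)$ does not conclude. You correctly observe that $V=\ker\big(A_0(X)\to\Alb_X(\C)\big)$ is a $\Q$-vector space (Proposition~\ref{roitman}(2)) and that $V$ is a quotient of $K:=\ker\big(J_\Gamma(\C)\to\Alb_X(\C)\big)$. But $K$ is the group of $\C$-points of an algebraic subgroup of $J_\Gamma$, i.e.\ an extension of a finite group by $B(\C)$ for some abelian variety $B$, and $B(\C)$, as an abstract abelian group, \emph{does} admit nonzero $\Q$-vector-space quotients (for instance $(\R/\Z)/(\Q/\Z)\simeq\R/\Q$). So the isogeny splitting does not force $V=0$; the missing input is again Roitman \cite{R72}.

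Your $(b)\Rightarrow(a)$ is fine, and your approach to $(b)\Leftrightarrow(d)$ via ``$(c)$ is geometric and base-changes to $\Omega$'' is a valid alternative to the paper's argument (which uses that $CH_0$ and $\Alb$ commute with filtered colimits and that $\Alb_{X_0}(F)\hookrightarrow\Alb_{X_0}(F')$ for $F\subset F'$), but note that it still requires $(c)\Rightarrow(b)$ over $\Omega$, hence ultimately Roitman \cite{R72}.
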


\begin{proof}

Que  (a) implique (b) est  le th\'eor\`eme \cite[\S 4, Thm. 4]{R72} de Roitman  (voir aussi \cite[\S 22.1.2]{V02}). 

Supposons (b).
Soit $\Gamma \subset X $ une courbe lisse  intersection compl\`ete  de sections hyperplanes de $X$.
On a alors un isomorphisme $A_{0}(\Gamma) \simeq \Alb_{\Gamma}(\C)$ et, par un th\'eor\`eme de Lefschetz, une surjection
 $\Alb_{\Gamma}(\C) \to \Alb_{X}(\C)$. 
 Si $ A_{0}(X) \to  \Alb_{X}({\mathbb C})$
 est un isomorphisme, alors $A_{0}(\Gamma)  \to A_{0}(X)$ est surjectif.
 Ainsi (b) implique (c).
 
 Que (c) implique (a) r\'esulte imm\'ediatement  du cas $X=\Gamma$
pour lequel l'\'enonc\'e suit  
 du th\'eor\`eme de Riemann-Roch sur une courbe.

L'\'enonc\'e (d) g\'en\'eralise (b). La $\C$-vari\'et\'e $X$ s'\'ecrit   $X=X_{0}\times_{k_{0}}\C$ avec
$k_{0} \subset \C$ un corps de type fini sur $\Q$.  Pour $F$ variant parmi les corps extensions de
$k_{0}$, chacun des foncteurs
$F \mapsto CH_{0}(X_{0}\times_{k_{0}}F)$ et
$F \mapsto \Alb_{X_{0}}(F)$ commute aux limites inductives filtrantes,
et pour $F \subset F'$ les applications $Alb_{X_{0}}(F) \to \Alb_{X_{0}}(F')$
sont injectives.
On en d\'eduit que (b) est \'equivalent \`a (d).
\end{proof}

Si les conditions \'equivalentes du th\'eor\`eme  \ref{representable}  sont satisfaites, on dit (classiquement) que
 {\it le groupe de Chow des z\'ero-cycles de la vari\'et\'e projective et lisse $X/\C$   est repr\'esentable}.
 
 \begin{lemma}\label{pasweil} 
 (a) Soit $X/\C$ une vari\'et\'e int\`egre. Pour tout corps $F$ contenant $\C$,
$X_{F}(F)$ est dense dans $X_{F}$ pour la topologie de Zariski.

(b) Soit $A/\C$ un groupe alg\'ebrique connexe.
Soit  $F$ un corps contenant $\C$.   Soit $U \subset A_{F}$
un ouvert de Zariski non vide. Tout \'el\'ement   $x \in A(F)$
s'\'ecrit $x=a.b^{-1}$ avec  $a, b \in U(F)$.
 \end{lemma}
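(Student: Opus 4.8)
The plan is to prove (a) first and then deduce (b), since (b) needs only that every nonempty Zariski open of $A_F$ carries an $F$-point, which is exactly the density asserted in (a). Note also that for part (a) one may apply it with $X=A$: a connected algebraic group over $\C$ is smooth (characteristic zero) and connected over an algebraically closed field, hence integral.

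For (a), I would reduce immediately to the affine case, density being a local property: $X_F$ is covered by opens $\operatorname{Spec}(A\otimes_{\C}F)$ where $\operatorname{Spec}A\subset X$ is affine open with $A$ a finitely generated $\C$-algebra that is an integral domain. To prove $X_F(F)$ dense it suffices to show every basic open $D(g)$, with $0\neq g\in A\otimes_{\C}F$, contains an $F$-point. Here I would write $g=\sum_i a_i\otimes f_i$ with the $f_i\in F$ linearly independent over $\C$ and the $a_i\in A$ not all zero, and fix $i_0$ with $a_{i_0}\neq 0$. The crucial input is that $\C$ is algebraically closed: since $A$ is a domain of finite type over $\C$, the closed points with residue field $\C$ lying in the nonempty open $D(a_{i_0})$ are dense, so there is a $\C$-algebra homomorphism $\psi\colon A\to\C$ with $\psi(a_{i_0})\neq 0$. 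Base-changing $\psi$ along $\C\hookrightarrow F$ yields an $F$-point $\psi\otimes\operatorname{id}_F\colon A\otimes_{\C}F\to F$, and evaluating $g$ gives $\sum_i\psi(a_i)f_i$, which is nonzero precisely because the $f_i$ are $\C$-independent and $\psi(a_{i_0})\neq 0$. Thus this $F$-point lies in $D(g)$, which proves density.

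For (b), I would exploit the group structure. The element $x\in A(F)$ acts on $A_F$ by left translation $L_x\colon g\mapsto xg$, an $F$-automorphism, so $L_x^{-1}(U)=\{g : xg\in U\}$ is again a nonempty Zariski open. Because $A$ is a connected algebraic group over the algebraically closed field $\C$, it is smooth and geometrically irreducible, whence $A_F$ is irreducible; consequently the two nonempty opens $U$ and $L_x^{-1}(U)$ meet, and $W:=U\cap L_x^{-1}(U)$ is a nonempty open. Applying (a) to $A$, the set $A_F(F)$ is dense in $A_F$, so $W$ contains a point $b\in A(F)$. Setting $a:=x\cdot b$ one has $b\in U(F)$ by construction and $a\in U(F)$ since $b\in L_x^{-1}(U)$ forces $xb\in U$; finally $x=a\cdot b^{-1}$, as required.

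The main obstacle lies entirely in (a), in the interplay between the linear-independence decomposition of $g$ and the density of $\C$-points: one must check that a single $\C$-point, produced purely from the geometry over $\C$ and not from the larger field $F$, already detects the nonvanishing of $g$. This is what makes the argument work for an arbitrary overfield $F$ with no largeness hypothesis, and it is why the algebraically closed base $\C$ cannot be weakened. Everything else — the reduction to the affine domain case, the translation step in (b), and the irreducibility of $A_F$ — is routine once this point is secured.
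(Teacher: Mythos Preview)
Your proof is correct. Part (b) is essentially identical to the paper's argument (you use $U\cap x^{-1}U$ where the paper uses $xU\cap U$, which is the same up to relabelling $a$ and $b$).

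For part (a), you take a genuinely different route. The paper argues by contradiction via spreading out: if $X(\C)\subset X_F(F)$ were contained in a proper closed $Y\subset X_F$, one descends $Y$ to a closed $\mathcal{Y}\subset X\times_\C\operatorname{Spec} R$ over a finitely generated $\C$-subalgebra $R\subset F$, and then specialises at a $\C$-point of $\operatorname{Spec} R$ to get $X(\C)$ inside a proper closed subset of $X$, contradicting the Nullstellensatz. Your argument is instead a direct affine-local construction: writing $g=\sum a_i\otimes f_i$ with the $f_i$ $\C$-linearly independent isolates a nonzero $a_{i_0}\in A$, a $\C$-point in $D(a_{i_0})$ exists by the Nullstellensatz, and its base change to $F$ lands in $D(g)$ because the $f_i$ stay independent. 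Both proofs ultimately reduce to the density of $X(\C)$ in $X$; yours is more elementary and explicit, while the paper's spreading-out argument is more geometric and would adapt more readily to situations where one cannot write things in coordinates so conveniently.
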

 
 \begin{proof}
(a) Il suffit de montrer que $X(\C) \subset X(F)$ est Zariski dense dans $X_{F}$.
Si ce n'est pas le cas, il existe un ferm\'e strict $Y \subset X_{F}$
contenant $X(\C)$. Comme ce ferm\'e est d\'efini par un nombre fini
d'\'equations,  le m\^eme \'enonc\'e vaut avec $F/\C$ le corps des fractions
d'une $\C$-alg\`ebre  $A$ de type fini. Quitte \`a inverser un nombre fini d'\'el\'ements dans $A$,
on peut supposer qu'il
 existe un sous-$A$-sch\'ema ferm\'e  ${\mathcal Y} \subset X\times_{\C} A$
qui est strictement contenu dans $X_{A}:=X\times_{\C} A$ qui par passage de $A$ \`a $F$
donne l'inclusion $Y\subset X_{F}$.
Quitte \`a restreindre $A$, on peut supposer que  pour tout $\C$-point $m$ de $Spec(A)$, 
c'est-\`a-dire tout $\C$-homorphisme $A \to \C$,
l'inclusion induite ${\mathcal Y}_{m} \subset X\times_{A}\C$ est stricte.
Pour tout point $m$, l'inclusion $X(\C) \subset  Y(F) \subset X_{F}(F) $ induit une inclusion
 $X(\C) \subset   {\mathcal Y}_{m}(\C) \subset X(\C)$ 
la fl\`eche compos\'ee \'etant l'identit\'e. Contradiction.

(b) Soit $x\in A(F)$.  Il existe un point de $A(F)$ dans l'ouvert $x.U \cap U$ de $A_{F}$.  On peut donc \'ecrire
$x.b = a \in A(F)$ avec $a, b \in U(F)$.
 \end{proof}

\begin{prop}\label{zerocycleuniv}
Soit  $X /{\mathbb C} $  une vari\'et\'e projective et lisse connexe.
 Consid\'erons les \'enonc\'es :

(i) Pour tout corps $F/{\mathbb C}$, l'application
$A_{0}(X_{F}) \to  A_{0}(X_{\overline F})^{G_{F}}$
est surjective.

(ii)  Pour tout corps $F/{\mathbb C}$, l'application
$A_{0}(X_{F}) \to  \Alb_{X}(F)$
est surjective.

(iii) Soit $F={\mathbb C}(\Alb_{X})$. L'application
$A_{0}(X_{F}) \to  \Alb_{X}(F)$
a le point g\'en\'erique de $\Alb_{X}$ dans son image.

La condition (i) implique (ii), et la condition (ii) est
\'equivalente \`a (iii).

Si  le groupe de Chow des z\'ero-cycles de $X$ est repr\'esentable,
alors les  trois  propri\'et\'es  sont \'equivalentes.
\end{prop}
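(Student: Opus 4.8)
The plan is to establish the chain (i)$\Rightarrow$(ii), (ii)$\Leftrightarrow$(iii), and---under the representability hypothesis---(ii)$\Rightarrow$(i), so that in the representable case the three conditions become equivalent.

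For (i)$\Rightarrow$(ii): over any field $F/\C$ the Albanese homomorphism factors as $A_{0}(X_{F}) \to A_{0}(X_{\overline F})^{G_{F}} \to \Alb_{X}(F)$, using that $\Alb_{X_{F}} = \Alb_{X}\times_{\C}F$ and $\Alb_{X}(\overline F)^{G_{F}} = \Alb_{X}(F)$. By Proposition~\ref{roitman}(3) applied with base field $F$, the right-hand arrow is surjective (with uniquely divisible kernel); hence if the left-hand arrow is surjective, so is the composite, which is exactly (ii). The implication (ii)$\Rightarrow$(iii) is immediate, since (iii) is the single assertion, for $F=\C(\Alb_{X})$, that the particular point $\eta\in\Alb_{X}(F)$ lies in the image of a map asserted surjective by (ii).

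The heart of the matter is (iii)$\Rightarrow$(ii). Write $A=\Alb_{X}$, $F=\C(A)$, and let $\eta\in A(F)$ be the tautological generic point. By (iii) I pick a degree-zero cycle $z\in A_{0}(X_{F})$ whose Albanese class is $\eta$. I would then spread $z$ out: there is a nonempty open $U\subseteq A$ and a relative zero-cycle $\mathcal Z$ on $X\times_{\C}U$ specializing to $z$, and after shrinking $U$ I may assume its components are finite flat over $U$, so that the fibre $\mathcal Z_{u}\in A_{0}(X_{L})$ is defined for every field $L/\C$ and every point $u\in U(L)$. Fibrewise Albanese defines a morphism $g\colon U\to A$ by $u\mapsto\Alb(\mathcal Z_{u})$; since $g$ takes the value $\eta$ at the generic point and $U$ is integral with $A$ separated, $g$ coincides with the open immersion $U\hookrightarrow A$, whence $\Alb(\mathcal Z_{u})=u$ for all such $u$. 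Given now an arbitrary $x\in A(L)$, Lemma~\ref{pasweil}(b) applied to the nonempty open $U_{L}\subseteq A_{L}$ writes $x=a-b$ with $a,b\in U(L)$; then $\mathcal Z_{a}-\mathcal Z_{b}\in A_{0}(X_{L})$ has Albanese image $x$, proving (ii). The main obstacle is precisely this paragraph: organizing the spreading-out so that specialization is literally defined along arbitrary $L$-points, and checking the identification $g=\,$inclusion; once that is in place, Lemma~\ref{pasweil}(b) supplies the passage from the open set $U$ to all of $A$.

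Finally, assume the Chow group of zero-cycles of $X$ is representable. Taking $\Omega=\overline F$ in Theorem~\ref{representable}(d), the map $A_{0}(X_{\overline F})\to A(\overline F)$ is an isomorphism, hence so is the induced map $A_{0}(X_{\overline F})^{G_{F}}\to A(F)$ on $G_{F}$-invariants. Since the composite $A_{0}(X_{F})\to A_{0}(X_{\overline F})^{G_{F}}\to A(F)$ is the Albanese homomorphism, its surjectivity---which is (ii)---is equivalent to surjectivity of the first arrow, namely (i). Combined with the implications above, this makes (i), (ii) and (iii) all equivalent in the representable case.
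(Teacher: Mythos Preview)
Your proof is correct and follows essentially the same route as the paper: the implications (i)$\Rightarrow$(ii)$\Rightarrow$(iii) and the representable case (ii)$\Rightarrow$(i) are handled identically, and for (iii)$\Rightarrow$(ii) both arguments spread the generic cycle over an open $U\subset\Alb_X$ and finish with Lemma~\ref{pasweil}(b). The only cosmetic difference is that the paper packages the spread-out via a section $\rho:U\to\Sym^n(X)$ of $\theta_n$, whereas you spread the full degree-zero cycle $z$ directly as a relative cycle $\mathcal Z$ on $X\times U$; your formulation is arguably cleaner since it handles the positive and negative parts of $z$ simultaneously and makes the identity $g=\iota_U$ immediate.
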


\begin{proof}  Comme cons\'equence du th\'eor\`eme de Roitman sur
la torsion du groupe de Chow, on a vu que l'application
$A_{0}(X_{\overline F})^{G_{F}}  \to \Alb_{X}(F)$ est surjective.
Donc (i) implique (ii). Que (ii) implique (iii) est \'evident.

Montrons que (iii) implique (ii).
Fixons un point $P \in X(\C)$. Soit $E=\C(\Alb_{X})$.
L'hypoth\`ese  (iii) implique l'existence d'un entier $n\geq 1$
tel que le  $\C$-morphisme
$$ \theta_{n } : \Sym^n(X/\C) \to   \Alb_{X}$$
 associant \`a un $n$-cycle effectif $(z_{1}+ \dots +z_{n})$
 l'image de $z_{1}+\dots+z_{n} - nP $  dans $\Alb_{X}$
 satisfasse : il existe un point de $\Sym^n(X/\C)(E)$ 
 d'image le point g\'en\'erique de $\Alb_{X}$. Ainsi, il existe
 un ouvert non vide $U \subset \Alb_{X}$ et un morphisme
 $\rho : U \to \Sym^n(X)$  tel que $\theta_{n} \circ \rho$
 soit l'inclusion de $U$ dans $\Alb_{X}$.
Alors, pour tout corps $F/\C$, l'ensemble
$U(F) \subset \Alb_{X}(F)$ est dans l'image de 
$A_{0}(X_{F})$. 
Comme d'apr\`es le lemme \ref{pasweil} l'application
$$U(F) \times U(F) \to \Alb_{X}(F)$$ donn\'ee par la soustraction
est surjective, 
ceci donne le r\'esultat.

Montrons la derni\`ere assertion. 
On a le diagramme commutatif
 \[
\begin{tikzcd}
A_{0}(X_{F} \arrow[d])   \arrow[r] & \Alb_{X}(F)\arrow[d] \\
A_{0}(X_{\overline{F}})^G      \arrow[r]  & \Alb_{X}({\overline{F}})^G 
\end{tikzcd}
	\]	

Sous l'hypoth\`ese (ii), la fl\`eche horizontale sup\'erieure est surjective.
La repr\'esen\-tabilit\'e sous la forme (d) dans le th\'eor\`eme \ref{representable}
assure que la fl\`eche horizontale inf\'erieure est un isomorphisme.
 Comme la fl\`eche verticale
de droite est un isomorphisme, on conclut que la fl\`eche
$A_{0}(X_{F}) \to  A_{0}(X_{\overline F})^{G_{F}}$
est surjective.
\end{proof}

\begin{comment}
Sous l'hypoth\`ese (ii), pour tout corps $F$, les applications 
compatibles $A_{0}(X_{F}) \to \Alb_{X}(F)$ et
$A_{0}(X_{\overline{F}}) \to \Alb_{X}({\overline{F}})$
sont surjectives. 
Si le groupe de Chow des z\'ero-cycles de $X$ est repr\'esentable,
d'apr\`es le th\'eor\`eme \ref{representable}, 
la fl\`eche $A_{0}(X_{\overline{F}}) \to \Alb_{X}({\overline{F}  })$
est un isomorphisme, et donc
$A_{0}(X_{\overline{F}})^G \to \Alb_{X}({\overline{F}})^G= \Alb_{X}(F)$
est un isomorphisme.
\end{comment}

On donne \`a la propri\'et\'e (ii) de la proposition \ref{zerocycleuniv} un nom :
 \begin{defin}
  (Voisin  \cite{V24a, V24b}) 
 {\it  Soit $X/\C$ une vari\'et\'e connexe, projective et lisse.
 Si pour tout corps $F/\C$, l'application $A_{0}(X_{F}) \to \Alb_{X}(F)$ est surjective,
 on dit que  la vari\'et\'e $X$
poss\`ede un z\'ero-cycle universel param\'etr\'e par $\Alb_{X}$.}
  \end{defin}

\begin{prop}  Si $X$ est une courbe, ou si $X$ est une vari\'et\'e ab\'elienne,
alors $X$
poss\`ede un z\'ero-cycle universel param\'etr\'e par $\Alb_{X}$.
\end{prop}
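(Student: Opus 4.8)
The plan is to verify directly, in each of the two cases, the surjectivity statement of property (ii) in Proposition~\ref{zerocycleuniv}; the passage through the generic point in (iii) is unnecessary here, because in both cases the map $A_{0}(X_{F}) \to \Alb_{X}(F)$ turns out to be manifestly surjective for every $F/\C$. The whole content lies in identifying $\Alb_{X}$ and the Albanese map explicitly in each situation.

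First I would treat the case of a curve $X = C$. Here $\Alb_{C} = J_{C} = \Pic^0_{C/\C}$, and the Albanese map is the Abel--Jacobi map attached to a base point $m \in C(\C)$. Fix $F \supseteq \C$. Since $m \in C(\C) \subset C(F)$, the curve $C_{F}$ carries an $F$-rational point, so the restriction $\Br(F) \to \Br(C_{F})$ is split injective. Feeding this into the exact sequence
$$ 0 \to \Pic(C_{F}) \to \Pic_{C/F}(F) \to \Br(F) \to \Br(C_{F}) $$
shows that the connecting map $\Pic_{C/F}(F) \to \Br(F)$ vanishes, so $A_{0}(C_{F}) = \Pic^0(C_{F})$ equals $J_{C}(F) = \Pic^0_{C/F}(F) = \Alb_{C}(F)$; in particular the natural map $A_{0}(C_{F}) \to \Alb_{C}(F)$ is an isomorphism, hence surjective.

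Next I would treat the case of an abelian variety $X = A$, with base point $0 \in A(\C)$. The universal property of the Albanese variety identifies $\Alb_{A}$ with $A$ itself, the Albanese map being the identity. Under this identification the homomorphism $A_{0}(A_{F}) \to \Alb_{A}(F) = A(F)$ carries the class of a degree-zero zero-cycle $\sum_{i} n_{i}P_{i}$ to the sum $\sum_{i} n_{i}P_{i}$ computed in the group $A(F)$. Thus, given any $a \in A(F)$, the degree-zero zero-cycle $[a] - [0]$ has a class in $A_{0}(A_{F})$ mapping to $a$, and surjectivity follows for every $F$.

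I do not expect a serious obstacle: once $\Alb_{X}$ and the Albanese map are correctly identified, surjectivity is immediate. The only point requiring genuine care is in the curve case, namely the distinction between the honest group $\Pic(C_{F})$ and the group $\Pic_{C/F}(F)$ of $F$-points of the Picard functor; the discrepancy is governed by $\Br(F)$ and is annihilated precisely because $C$, and hence $C_{F}$, has an $F$-rational point. In the abelian-variety case the single fact to recall is the standard identification of the Albanese map of $A$, based at the origin, with the identity.
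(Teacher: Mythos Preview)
Your proposal is correct and follows essentially the same approach as the paper: the abelian-variety case is identical (lift $a\in A(F)$ via $[a]-[0]$), and for curves you prove the same isomorphism $A_{0}(C_{F})\simeq \Alb_{C}(F)$ that the paper simply asserts, supplying the standard justification via the Brauer sequence and the existence of an $F$-rational point on $C_{F}$. The paper's proof is terser but the content is the same.
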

\begin{proof}
Soit $X=C$ est une courbe. Dans ce cas, pour tout corps $F/{\mathbb C}$,
les fl\`eches $$A_{0}(C_{F}) \to  A_{0}(C_{\overline F})^{G_{F}} \to \Alb_{C}(F)$$
sont des isomorphismes.
 
Soit $X=A/\C$ est une vari\'et\'e ab\'elienne, alors
 on a $A=Alb_{A}$, et tout point $m\in  \Alb_{A}(F)$
est l'image du z\'ero-cycle $[m] - [0] \in Z_{0}(A_{F})$.
\end{proof}

Notons que  si $X=A$ est une vari\'et\'e ab\'elienne avec $dim(A)\geq 2$,
 le groupe de Chow des z\'ero-cycles n'est pas repr\'esentable: 
la fl\`eche $A_{0}(A) \to \Alb_{A}({\mathbb C})=A(\C)$ n'est pas un isomorphisme 
(cas particulier du th\'eor\`eme de Mumford).

Voisin  \cite[Cor. 0.14, Cor. 3.1]{V24b} donne des exemples de vari\'et\'es $M/\C$, et m\^{e}me de surfaces
$M$,  pour lesquelles il n'existe pas de z\'ero-cycle universel param\'etr\'e
par $\Alb_{M}$.
Pour une telle vari\'et\'e $M$,   pour $F={\mathbb C}(\Alb_{M})$, la fl\`eche  $A_{0}(M_{F}) \to \Alb_{M}(F)$ n'est pas surjective,
et  la fl\`eche $A_{0}(M_{F}) \to  A_{0}(M_{\overline F})^{G_{F}}$
n'est pas surjective.

 \begin{question}
 Peut-on donner un exemple de vari\'et\'e projective et lisse  $M$ sur $\mathbb C$
dont le groupe de Chow  des z\'ero-cycles de $M$ est repr\'esentable, c'est-\`a-dire que
$$alb : A_{0}(M) \to \Alb_{M}({\mathbb C})$$
est un isomorphisme
(et donc   $H^{i}(M,O_{M} )=0$  pour $i \geq 2$),
mais il existe un corps $F/{\mathbb C}$ avec
$$A_{0}(M_{F}) \to \Alb_{M}(F)$$
non surjectif ?
\end{question}

\begin{rmk}
Soit $X/\C$ une surface d'Enriques. Elle est munie d'un rev\^etement double
non ramifi\'e $Y \to X$, avec $Y$ int\`egre. Ceci d\'efinit une classe non nulle  $\xi \in H^1_{et}(X,\Z/2)$.
Bloch, Kas et Liebermann ont montr\'e $A_{0}(X)=0$. Soit $F=\C(X)$. Soit $\eta \in X(F)$ le point g\'en\'erique de
$X$ et $m\in X(\C) \subset X(F)$. L'image de $(\eta - m, \xi)$ par l'accouplement bilin\'eaire
$$ A_{0}(X_{F}) \times H^1_{et}(X,\Z/2) \to H^1(F, \Z/2)$$
est la classe de l'image de $\xi$ dans $H^1_{\text{\'et}}(\C(X), \Z/2)$, qui n'est pas nulle, car $Y \to X$
n'a pas de section rationnelle.
Donc $A_{0}(X_{F})\neq 0$.
 \end{rmk}

Pour $X \to Y$ 
un sch\'ema de Severi-Brauer, et plus g\'en\'eralement pour
$X \to Y$ un morphisme dominant \`a fibre g\'en\'erique
g\'eom\'etrique rationnellement connexe,
on a un isomorphisme $\Alb_{X} \simeq \Alb_{Y}$
(lemme \ref{isoalb}). 
 Pour $Y$ poss\'edant un z\'ero-cycle universel  param\'etr\'e par $\Alb_{Y}$,
dans \cite{V24a}, Voisin  examine la question s'il
 existe pour $X$  un  z\'ero-cycle universel  param\'etr\'e
par $\Alb_{X} $.

\begin{lemma}\label{produit}
Soit $X = Y \times_{\mathbb C}Z$ un produit 
de
vari\'et\'es projectives et lisses sur $\mathbb C$.
Pour tout corps $F/{\mathbb C}$, l'homomorphisme
$$A_{0}(X_{F}) \to  \Alb_{X}(F)$$
est surjectif si et seulement si il
l'est pour $Y$ et pour  $Z$.
\end{lemma}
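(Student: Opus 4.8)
The plan is to reduce the statement to two standard facts about products: that the Albanese variety of a product is the product of the Albanese varieties, and that the Albanese map on zero-cycles is covariantly functorial (as recalled in the introduction). Write $p : X \to Y$ and $q : X \to Z$ for the two projections, and $\mathrm{alb}_{X}, \mathrm{alb}_{Y}, \mathrm{alb}_{Z}$ for the Albanese maps on zero-cycles over $F$. By the rigidity property of morphisms from a product to an abelian variety (a morphism $Y\times Z \to A$ sending a base point $(y_{0},z_{0})$ to $0$ is the sum of its restrictions to $Y\times\{z_{0}\}$ and $\{y_{0}\}\times Z$), one has a canonical isomorphism $\Alb_{X} \simeq \Alb_{Y}\times\Alb_{Z}$ under which the two projections are induced by $p_{*}$ and $q_{*}$. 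Combining this with covariant functoriality, for every field $F/\C$ the map takes the form
$$ A_{0}(X_{F}) \longrightarrow \Alb_{X}(F) = \Alb_{Y}(F)\times\Alb_{Z}(F), \qquad w \longmapsto \bigl(\mathrm{alb}_{Y}(p_{*}w),\, \mathrm{alb}_{Z}(q_{*}w)\bigr). $$

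For the necessity direction, I would simply compose the (assumed surjective) map $A_{0}(X_{F}) \to \Alb_{Y}(F)\times\Alb_{Z}(F)$ with the projection onto the first factor. This composite equals $\mathrm{alb}_{Y}\circ p_{*}$, which is therefore surjective, so $\mathrm{alb}_{Y} : A_{0}(Y_{F}) \to \Alb_{Y}(F)$ is surjective; the same argument applied to the second factor handles $Z$. Since this holds for every $F/\C$, both factors possess the surjectivity property.

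For the sufficiency direction, I would fix rational base points $y_{0}\in Y(\C)$ and $z_{0}\in Z(\C)$ and consider the closed immersions $i : Y\hookrightarrow X$, $y\mapsto(y,z_{0})$, and $j : Z\hookrightarrow X$, $z\mapsto(y_{0},z)$, which are proper and hence induce pushforwards $i_{*} : A_{0}(Y_{F}) \to A_{0}(X_{F})$ and $j_{*} : A_{0}(Z_{F}) \to A_{0}(X_{F})$. Given $(a,b)\in\Alb_{Y}(F)\times\Alb_{Z}(F)$, I would choose $u\in A_{0}(Y_{F})$ with $\mathrm{alb}_{Y}(u)=a$ and $v\in A_{0}(Z_{F})$ with $\mathrm{alb}_{Z}(v)=b$ (possible by hypothesis), and set $w=i_{*}(u)+j_{*}(v)$. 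Since $p\circ i=\mathrm{id}_{Y}$ while $p\circ j$ is the constant map to $y_{0}$, one computes $p_{*}w=u$: the contribution of $j_{*}(v)$ vanishes because the pushforward of a degree-zero cycle along a constant map factors through $A_{0}(\Spec F)=0$. Symmetrically $q_{*}w=v$. Hence the image of $w$ is $(\mathrm{alb}_{Y}(u),\mathrm{alb}_{Z}(v))=(a,b)$, giving surjectivity for $X$ over every $F/\C$.

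The only genuinely non-formal input is the product decomposition $\Alb_{X}\simeq\Alb_{Y}\times\Alb_{Z}$ together with the compatibility of this isomorphism with the zero-cycle Albanese maps and with the projections $p_{*},q_{*}$; once that is established the two implications are pure bookkeeping with proper pushforwards. I expect this compatibility to be the main point to set up carefully, but it is classical, following from the universal property of the Albanese and the covariant functoriality of $A_{0}(\,\cdot\,)\to\Alb(\,\cdot\,)$ already recorded above.
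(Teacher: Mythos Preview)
Your proof is correct and follows the same overall strategy as the paper: both directions rest on the product decomposition $\Alb_{X}\simeq\Alb_{Y}\times\Alb_{Z}$ together with covariant functoriality of $A_{0}(-)\to\Alb_{-}(F)$, and the necessity direction is argued identically via the projections $p_{*},q_{*}$.

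For sufficiency the presentations diverge slightly. The paper invokes the ``produit ext\'erieur des cycles'' $A_{0}(Y_{F})\times A_{0}(Z_{F})\to A_{0}(X_{F})$ and checks compatibility with the Albanese isomorphism after passing to an algebraic closure of $F$. Your argument is more explicit: you use the slice inclusions $i:y\mapsto(y,z_{0})$ and $j:z\mapsto(y_{0},z)$ and the formula $w=i_{*}(u)+j_{*}(v)$, then verify directly that $p_{*}w=u$ and $q_{*}w=v$ because pushing a degree-zero cycle along a constant map kills it. This avoids any appeal to the algebraic closure and makes the preimage of $(a,b)$ completely concrete; in effect your $i_{*}(u)+j_{*}(v)$ is $u\times[z_{0}]+[y_{0}]\times v$, which is what the paper's ``exterior product'' must be unwound to in order to land in $\Alb_{Y}(F)\times\Alb_{Z}(F)$ rather than at zero. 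The two arguments are equivalent, but yours is the cleaner bookkeeping.
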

\begin{proof}  
L'isomorphisme de vari\'et\'es ab\'eliennes 
$$\Alb_{X} \times \Alb_{Y} \simeq \Alb_{Z}$$
 induit un isomorphisme de groupes ab\'eliens
$$ \Alb_{X}(F) \times \Alb_{Y}(F)  \simeq \Alb_{Z}(F).$$

La fonctorialit\'e covariante en $X$ de l'application  $A_{0}(X_{F}) \to  \Alb_{X}(F)$
donne que la surjectivit\'e  de l'application d'Albanese sur $F$ pour les z\'ero-cycles 
pour $X$ l'implique pour $Y$ et pour $Z$.

Le produit ext\'erieur des cycles donne
une application 
$$A_{0}(Y_{F}) \times A_{0}(Z_{F}) \to A_{0}(X_{F}).$$
Le diagramme
   \[
\begin{tikzcd}
 A_{0}(Y_{F})\arrow[d]  &  \times   & A_{0}(Z_{F}) \arrow[r] \arrow[d] &  A_{0}(X_{F}) \arrow[d] \\
\Alb_{Y}(F)  &  \times  &  \Alb_{Z} (F)  \arrow[r] &   \Alb_{X}(F)
 \end{tikzcd}
	\]	
est fonctoriel en le corps $F$.  On v\'erifie qu'il est  commutatif
en passant \`a  une cl\^{o}ture alg\'ebrique de $F$.
On en d\'eduit que la surjectivit\'e de l'application d'Albanese sur $F$ 
pour les z\'ero-cycles  pour $Y$ et pour $Z$
l'implique pour $X$.
\end{proof}

\begin{lemma}\label{birationnel}
Si $X$ et $Y$ sont deux vari\'et\'es projectives, lisses sur $\C$,
stablement birationnelles entre elles, $X$ admet un z\'ero-cycle universel  sur $X$ param\'etr\'e par
$\Alb_{X}$ si et seulement si il en est de m\^{e}me de $Y$.
\end{lemma}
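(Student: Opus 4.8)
Le plan est de ramener l'énoncé à deux situations particulières~: l'invariance par multiplication par un espace projectif, puis l'invariance purement birationnelle. Disons qu'une $\C$-variété $V$ projective et lisse \emph{a la propriété} si l'application $A_{0}(V_{F}) \to \Alb_{V}(F)$ est surjective pour tout corps $F/\C$. Pour tout $n$, l'espace projectif $\P^n$ est rationnel, donc $A_{0}((\P^n)_{F})=0$ et $\Alb_{\P^n}=0$~; ainsi $\P^n$ a trivialement la propriété. Le lemme \ref{produit} appliqué avec le facteur $\P^n$ montre alors que $X\times \P^n$ a la propriété si et seulement si $X$ l'a. Comme la stable birationalité de $X$ et $Y$ signifie qu'il existe $m,n$ avec $X\times\P^m$ birationnelle à $Y\times\P^n$, il suffira d'établir l'invariance birationnelle de la propriété parmi les variétés projectives et lisses, puis de l'appliquer aux variétés projectives lisses $X\times\P^m$ et $Y\times\P^n$.

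Pour l'invariance birationnelle, soient $X,Y$ projectives, lisses, birationnelles sur $\C$. En prenant l'adhérence $\Gamma\subset X\times Y$ du graphe de l'application birationnelle, puis une résolution $W\to\Gamma$, les deux projections fournissent des morphismes projectifs birationnels $g:W\to X$ et $h:W\to Y$ de $\C$-variétés projectives lisses intègres. Il suffit de montrer~: pour un tel morphisme $g:W\to X$, la variété $W$ a la propriété si et seulement si $X$ l'a. La fibre générique de $g$ est $\Spec \C(X)$, géométriquement intègre et de groupe de Picard nul, donc de type fini~; le lemme \ref{isoalb} donne que $g_{*}:\Alb_{W}\to\Alb_{X}$ est un isomorphisme, d'où un isomorphisme $\lambda_{F}:\Alb_{W}(F)\to\Alb_{X}(F)$ fonctoriel en $F$.

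L'ingrédient crucial est la surjectivité de $g_{*}:A_{0}(W_{F})\to A_{0}(X_{F})$ pour tout $F/\C$. Comme $g$ est birationnel et $X$ lisse, il existe un ouvert $U\subset X$ de complémentaire de codimension $\geq 2$ au-dessus duquel $g$ est un isomorphisme~; le corps $F$, contenant $\C$, est infini, et le lemme de déplacement des zéro-cycles permet de représenter toute classe de $A_{0}(X_{F})$ par un cycle à support dans $U_{F}$, lequel se relève à $W_{F}$ via l'isomorphisme $g^{-1}(U)\simeq U$. (Sur une clôture algébrique, la proposition \ref{ratconsurquelconque} donne même que $g_{*}$ est un isomorphisme.) On dispose alors du carré commutatif, fonctoriel en $F$,
$$\xymatrix{ A_{0}(W_{F}) \ar[r] \ar[d]_{g_{*}} & \Alb_{W}(F) \ar[d]^{\simeq} \\ A_{0}(X_{F}) \ar[r] & \Alb_{X}(F) }$$
dont les flèches horizontales sont les applications d'Albanese et la flèche verticale de droite est $\lambda_{F}$. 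Puisque $g_{*}$ est surjective et $\lambda_{F}$ un isomorphisme, une chasse immédiate au diagramme montre que la flèche inférieure est surjective si et seulement si la supérieure l'est~: $W$ a la propriété si et seulement si $X$ l'a. Le même argument vaut pour $h:W\to Y$, d'où l'équivalence cherchée entre $X$ et $Y$.

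L'obstacle principal est le contrôle de $g_{*}$ sur $A_{0}$ au niveau des corps $F$ non algébriquement clos~: la proposition \ref{ratconsurquelconque} ne fournit l'isomorphisme que sur un corps algébriquement clos, et il faut donc passer par le lemme de déplacement (licite car $\C\subset F$ est infini) pour obtenir la surjectivité $F$-rationnelle, tandis que l'isomorphisme des variétés d'Albanese n'est que l'apport direct du lemme \ref{isoalb}.
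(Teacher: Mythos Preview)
Your proof is correct and follows essentially the same route as the paper's: reduction via resolution of singularities to the two elementary cases $Z=\P^n\times X\to X$ and $Z\to X$ a birational morphism of smooth projective varieties, followed by a diagram chase in the commutative square relating $A_{0}$ and $\Alb$. The only minor differences are that you invoke Lemma~\ref{produit} for the $\P^n$ factor (the paper treats both cases uniformly in the same square), and that you establish only the surjectivity of $g_{*}:A_{0}(W_{F})\to A_{0}(X_{F})$ via the moving lemma rather than asserting the full isomorphism as the paper does --- but surjectivity is exactly what the diagram chase requires, so nothing is lost.
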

\begin{proof} Un  morphisme $p: Z\to X$   de $\C$-vari\'et\'es projectives et lisses
g\'eom\'etriquement int\`egres induit  pour tout corps $F/\C$  un carr\'e commutatif
  \[
\begin{tikzcd}
  A_{0}(Z_{F})   \arrow[r] \arrow[d] &      A_{0}(X_{F})  \arrow[d] \\
 \Alb_{Z}(F)   \arrow[r] &  \Alb_{X}(F)
 \end{tikzcd}
	\]	
Pour  $Z= \P^n \times_{\C}X$ et la projection sur $X$, et lorsque $Z \to X$ est un  morphisme birationnel,
 on sait que les fl\`eches horizontales sont des isomorphismes. 
Par r\'esolution des singularit\'es on est ramen\'e \`a ces deux cas.
\end{proof}

Soit $\mathcal{C}$ la classe des vari\'et\'es $Y/\C$ connexes, projectives
et lisses satisfaisant la propri\'et\'e :

{\it  Pour tout morphisme dominant $X \to Y$ de vari\'et\'es
connexes, projectives et lisses \`a fibre g\'en\'erique g\'eom\'etrique
une vari\'et\'e rationnellement connexe, et pour tout corps $F/\C$,
l'application $A_{0}(X_{F}) \to \Alb_{X}(F)$ est surjective.}

Cette propri\'et\'e implique en particulier sa validit\'e pour $Y=X$.

Pour un morphisme comme ci-desssus, d'apr\`es le lemme \ref{isoalb}, la fl\`eche
 $\Alb_{X} \to \Alb_{Y}$ est un isomorphisme. Ceci est utilis\'e tacitement
 dans les d\'emonstrations suivantes.

\begin{lemma}\label{stabilite}
La classe $\mathcal{C}$ est stable par \'equivalence birationnelle, 
par produit, et par facteur direct birationnel.
\end{lemma}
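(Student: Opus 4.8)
Le plan est de traiter s\'epar\'ement les trois stabilit\'es annonc\'ees, la stabilit\'e par produit \'etant de loin la plus substantielle ; on utilise librement que, pour un morphisme dominant \`a fibre g\'en\'erique g\'eom\'etrique rationnellement connexe, le lemme~\ref{isoalb} identifie les vari\'et\'es d'Albanese de la source et du but. \emph{Stabilit\'e birationnelle.} Soit $Y \in \mathcal{C}$ et $Y'$ une vari\'et\'e projective, lisse, connexe birationnelle \`a $Y$ ; montrons $Y'\in\mathcal{C}$. \'Etant donn\'e un morphisme dominant $X' \to Y'$ \`a fibre g\'en\'erique g\'eom\'etrique rationnellement connexe, on compose avec une application birationnelle $Y' \dashrightarrow Y$ et, par r\'esolution des ind\'eterminations, on obtient un morphisme propre birationnel $\tilde{X} \to X'$, avec $\tilde{X}$ projective lisse, et un morphisme dominant $\tilde{X} \to Y$. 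Via l'identification $\C(Y) = \C(Y')$, la fibre g\'en\'erique de $\tilde{X} \to Y$ est birationnelle \`a celle de $X' \to Y'$, donc reste g\'eom\'etriquement rationnellement connexe. L'hypoth\`ese $Y \in \mathcal{C}$ fournit un z\'ero-cycle universel pour $\tilde{X}$, et le lemme~\ref{birationnel} le transporte \`a $X'$, qui lui est birationnelle. Ceci valant pour tout tel $X'$, on conclut $Y'\in\mathcal{C}$.

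\emph{Stabilit\'e par facteur direct birationnel.} Gr\^ace \`a la stabilit\'e birationnelle, il suffit d'\'etablir l'implication : $Y \times_{\C} Z \in \mathcal{C}$ entra\^ine $Y \in \mathcal{C}$. Soit $X \to Y$ un morphisme dominant \`a fibre g\'en\'erique g\'eom\'etrique rationnellement connexe. Alors $X \times_{\C} Z \to Y \times_{\C} Z$ est dominant et sa fibre g\'en\'erique g\'eom\'etrique, d\'eduite par extension des scalaires de celle de $X \to Y$, est rationnellement connexe. L'hypoth\`ese $Y \times_{\C} Z \in \mathcal{C}$ donne un z\'ero-cycle universel pour $X \times_{\C} Z$, et le lemme~\ref{produit} entra\^ine qu'il en existe un pour $X$. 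D'o\`u $Y \in \mathcal{C}$.

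\emph{Stabilit\'e par produit.} Soient $Y_{1}, Y_{2} \in \mathcal{C}$ et $\pi : X \to Y_{1} \times_{\C} Y_{2}$ un morphisme dominant \`a fibre g\'en\'erique g\'eom\'etrique rationnellement connexe. Le lemme~\ref{isoalb} donne $\Alb_{X} \simeq \Alb_{Y_{1}} \times \Alb_{Y_{2}}$, d'o\`u pour tout corps $F/\C$ une d\'ecomposition $\Alb_{X}(F) \simeq \Alb_{Y_{1}}(F) \times \Alb_{Y_{2}}(F)$. L'application $A_{0}(X_{F}) \to \Alb_{X}(F)$ \'etant un homomorphisme de groupes, il suffit, pour sa surjectivit\'e, que son image contienne chacun des deux facteurs. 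Posons $\pi_{1} = p_{1} \circ \pi$, o\`u $p_{1} : Y_{1} \times_{\C} Y_{2} \to Y_{1}$ est la projection. Pour $y_{1} \in Y_{1}(\C)$ g\'en\'eral, la lissit\'e g\'en\'erique en caract\'eristique z\'ero assure que $X^{(2)} := \pi_{1}^{-1}(y_{1})$ est projective lisse connexe, et $X^{(2)} \to Y_{2}$ est dominant \`a fibre g\'en\'erique g\'eom\'etrique rationnellement connexe (le lieu de connexit\'e rationnelle des fibres de $\pi$ \'etant ouvert dense). Comme $Y_{2} \in \mathcal{C}$, la vari\'et\'e $X^{(2)}$ poss\`ede un z\'ero-cycle universel param\'etr\'e par $\Alb_{X^{(2)}} \simeq \Alb_{Y_{2}}$. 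L'immersion ferm\'ee $j : X^{(2)} \hookrightarrow X$ induit $j_{*} : A_{0}(X^{(2)}_{F}) \to A_{0}(X_{F})$, et par fonctorialit\'e covariante de l'application d'Albanese, l'image du compos\'e $A_{0}(X^{(2)}_{F}) \to A_{0}(X_{F}) \to \Alb_{X}(F)$ est $\{0\} \times \Alb_{Y_{2}}(F)$ : en effet $X^{(2)} \to Y_{1}$ est constant, donc nul sur les Albanese, tandis que $X^{(2)} \to Y_{2}$ induit l'isomorphisme $\Alb_{X^{(2)}} \simeq \Alb_{Y_{2}}$. En coupant sym\'etriquement par $Y_{1} \times \{y_{2}\}$ pour $y_{2} \in Y_{2}(\C)$ g\'en\'eral et en invoquant $Y_{1} \in \mathcal{C}$, on place de m\^eme $\Alb_{Y_{1}}(F) \times \{0\}$ dans l'image. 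Ces deux sous-groupes engendrant $\Alb_{X}(F)$, on obtient la surjectivit\'e de $A_{0}(X_{F}) \to \Alb_{X}(F)$, pour tout $F/\C$ et tout tel $X$ ; ainsi $Y_{1} \times_{\C} Y_{2} \in \mathcal{C}$.

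L'obstacle principal est la stabilit\'e par produit. Tout le travail g\'eom\'etrique y consiste \`a v\'erifier que les sections $X^{(i)}$ obtenues par coupure sont bien des vari\'et\'es projectives lisses connexes \`a fibre g\'en\'erique g\'eom\'etrique rationnellement connexe --- ce que garantissent, pour des points g\'en\'eraux, la lissit\'e g\'en\'erique et le caract\`ere ouvert du lieu de connexit\'e rationnelle --- puis, via la fonctorialit\'e de l'application d'Albanese, \`a suivre s\'epar\'ement la contribution de chacun des deux facteurs de $\Alb_{X}$.
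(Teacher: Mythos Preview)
Your proof is correct and follows essentially the same approach as the paper's: for the product stability you slice $X$ by the preimages of $\{y_{1}\}\times Y_{2}$ and $Y_{1}\times\{y_{2}\}$ and use covariant functoriality of the Albanese map to hit each factor of $\Alb_{X}(F)\simeq\Alb_{Y_{1}}(F)\times\Alb_{Y_{2}}(F)$, and for the direct factor you tensor by the other factor and invoke Lemma~\ref{produit}, exactly as in the paper. The only minor differences are that you choose $y_{i}$ general so that the slice $X^{(i)}$ is already smooth (the paper instead fixes a point with smooth fibre and then desingularises the relevant component), and your treatment of birational stability spells out the resolution-of-indeterminacies argument that the paper's one-line citation of Lemma~\ref{birationnel} leaves implicit.
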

\begin{proof}
(a) La stabilit\'e par  \'equivalence birationnelle r\'esulte du lemme \ref{birationnel}.

(b) Rappelons que la propri\'et\'e pour une vari\'et\'e projective et lisse d'\^{e}tre
g\'eom\'e\-tri\-quement rationnellement connexe est stable par sp\'ecialisation
(Koll\'ar-Miyaoka-Mori).
Soit $p= X \to Y=Y_{1}\times Y_{2}$ un morphisme projectif \`a fibre g\'en\'erique g\'eom\'etrique
une vari\'et\'e rationnellement connexe. Soit $m=(m_{1},m_{2})$
un point de $Y(\C)$ \`a fibre  lisse. 
Soit $p_{1} : W_{1} \to Y_{1}$ la restriction de $p$ au-dessus de $Y_{1} \times m_{2}$
et $p_{2} : W_{2} \to Y_{2}$ la restriction de $p$ au-dessus de $m_{1} \times Y_{2}$.
Les fibres g\'en\'eriques  g\'eom\'etriques de $p_{1}$ et de $p_{2}$ sont lisses et rationnellement connexes.
Soit $\tilde{W_{i}} \to W_{i}$ une d\'esingularisation de l'adh\'erence dans $W_{i}$  de  la fibre g\'en\'erique de $p_{i}$.
 L'assertion pour le produit r\'esulte alors du fait que  l'on a 
$$\Alb_{Y}(F) =\Alb_{Y_{1}}(F) \times \Alb_{Y_{2}}(F)$$
et de la fonctorialit\'e  en $X$ de l'application $A_{0}(X_{F}) \to \Alb_{X}(F)$
appliqu\'ee aux morphismes $\tilde{W}_{i} \to X$.

(c)  Comme on l'invariance birationnelle,  il suffit de montrer que l'assertion : si la propri\'et\'e vaut pour
$Y_{1} \times Y_{2}$, elle vaut pour $Y_{1}$.
Soit $X_{1} \to Y_{1}$ un morphisme dominant de vari\'et\'es 
connexes, projective lisses  \`a fibre g\'en\'erique g\'eom\'etrique
rationnellement connex. Le morphisme $X:=X_{1} \times Y_{2} \to 
Y:= Y_{1} \times Y_{2}$ donn\'e par l'identit\'e sur le second facteur
a la m\^eme propri\'et\'e. La surjectivit\'e de $A_{0}(X_{F}) \to \Alb_{X}(F)=\Alb_{Y}(F)$
implique celle de   $A_{0}(X_{1,F}) \to \Alb_{X_{1}}(F)= \Alb_{Y_{1}}(F)$.
\end{proof}

\begin{comment}
Soit $Y= Y_{1} \times Y_{2}$.  Soit $X_{1} \to Y_{1}$ un
morphisme dominant  de vari\'et\'es
connexes, projectives et lisses \`a fibre g\'en\'erique g\'eom\'etrique
une vari\'et\'e rationnellement con\-nexe.  
Soit $X = Y \times_{X_{1}}Y_{1}$.
La surjectivit\'e de $A_{0}(X_{F}) \to \Alb_{Y}(F)$
implique celle de  $A_{0}(X_{F}) \to \Alb_{Y}(F)$.
Soit $m \in Y_{2}(\C)$. On l'utilise pour plonger $Y_{1}$ dans $Y$.
La fl\`eche compos\'ee $$Y_{1} \to Y \to Y_{1}$$  est l'identit\'e.
On consid\`ere alors le diagramme
$$  X_{1}  \to  X   \to   X_{1}  $$  
$$ Y_{1}    \to Y   \to    Y_{1}.$$
La surjectivit\'e de $A_{0}(X_{F}) \to \Alb_{X}(F)=\Alb_{Y}(F)$
implique celle de $A_{0}({{X_{1}}_{F}}) \to \Alb_{X_{1}}(F)=\Alb_{Y_{1}}(F)$.
\end{comment}

La classe $\mathcal{C}$ contient les courbes :
\begin{prop}\label{courbe}
Soit $C$ une courbe projective et lisse.
Soit $X \to C$ un morphisme dominant \`a fibre g\'en\'erique
g\'eom\'etrique rationnellement connexe.
Pour tout corps $F/{\mathbb C}$, l'application
$A_{0}(X_{F}) \to  \Alb_{X}(F)$ est surjective.
\end{prop}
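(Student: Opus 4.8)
The plan is to reduce the surjectivity of the Albanese map for $X_{F}$ to the existence of a rational section of $f$, which over a curve is provided by the theorem of Graber--Harris--Starr.

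First I would record the identification of the Albanese varieties. The geometric generic fibre $Z$ of $f$ is rationally connected, hence $H^{1}(Z,\mathcal{O}_{Z})=0$ and $\Pic(Z)=\NS(Z)$ is finitely generated; by Lemma~\ref{isoalb} the map $f_{*}:\Alb_{X}\to\Alb_{C}$ is then an isomorphism of abelian varieties, and so induces an isomorphism $\Alb_{X}(F)\xrightarrow{\sim}\Alb_{C}(F)$ for every $F/\mathbb{C}$. For each such $F$ the morphism $f$ yields a commutative square
$$\xymatrix{
A_{0}(X_{F}) \ar[r] \ar[d]_{f_{*}} & \Alb_{X}(F) \ar[d]^{\simeq} \\
A_{0}(C_{F}) \ar[r] & \Alb_{C}(F)
}$$
whose right vertical map is this isomorphism and whose lower horizontal map $A_{0}(C_{F})\to\Alb_{C}(F)$ is surjective (it is the Abel--Jacobi map of the curve $C_{F}$; since $C$ has a $\mathbb{C}$-point, hence an $F$-point, it is in fact an isomorphism $A_{0}(C_{F})\simeq J_{C}(F)$). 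A chase in this square shows that the top map is surjective as soon as $f_{*}:A_{0}(X_{F})\to A_{0}(C_{F})$ is surjective, so the whole problem reduces to surjectivity of this pushforward.

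The key geometric input is that $f$ admits a section. Since $C$ is a smooth projective curve over the algebraically closed field $\mathbb{C}$ and the geometric generic fibre of $f$ is rationally connected, the theorem of Graber--Harris--Starr furnishes a section $s:C\to X$ of $f$ over $\mathbb{C}$. Base change gives a section $s_{F}:C_{F}\to X_{F}$ with $f_{F}\circ s_{F}=\mathrm{id}_{C_{F}}$, and functoriality of proper pushforward on Chow groups gives $(f_{F})_{*}\circ(s_{F})_{*}=\mathrm{id}$ on $A_{0}(C_{F})$. Hence $(f_{F})_{*}:A_{0}(X_{F})\to A_{0}(C_{F})$ is (split) surjective, and by the reduction above $A_{0}(X_{F})\to\Alb_{X}(F)$ is surjective for every $F/\mathbb{C}$.

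The only non-formal ingredient, and hence the main point, is the existence of the section: this is exactly what singles out a one-dimensional base, whereas over a higher-dimensional base a section need not exist. Everything else---the identification $\Alb_{X}\simeq\Alb_{C}$, the surjectivity of the Albanese map of a curve, and the functoriality of pushforward---is already available in the text (Lemma~\ref{isoalb} and the discussion preceding it). As an alternative to the diagram chase one could verify criterion (iii) of Proposition~\ref{zerocycleuniv} directly: with $F=\mathbb{C}(\Alb_{X})$ and $\eta$ the generic point of $\Alb_{X}$, choosing an element of $A_{0}(C_{F})$ with Albanese image $f_{*}(\eta)\in\Alb_{C}(F)$ and pushing it forward along $s$ produces a zero-cycle on $X_{F}$ with Albanese class $\eta$; but invoking the section makes the direct argument just as short.
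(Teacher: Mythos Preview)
Your proof is correct and follows essentially the same route as the paper's: invoke Graber--Harris--Starr to obtain a section $s:C\to X$, use it to see that $f_{*}:A_{0}(X_{F})\to A_{0}(C_{F})$ is surjective, and conclude via $\Alb_{X}\simeq\Alb_{C}$. Your write-up is more explicit about the commutative square and the appeal to Lemma~\ref{isoalb}, but the argument is the same.
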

\begin{proof}
D'apr\`es le th\'eor\`eme de Graber, Harris et Starr, 
le morphisme $X \to C$ admet une section.
Ceci implique que $A_{0}(X_{F}) \to A_{0}(C_{F})=\Alb_{C}(F)$ est surjectif,
et donc aussi  $A_{0}(X_{F}) \to  \Alb_{X}(F)$.
\end{proof}

La classe $\mathcal C$ contient les jacobiennes de courbes :
\begin{prop}\label{jacobienne}
Soit $A=J(C)$  la jacobienne d'une courbe projective et lisse $C$.
Soit $X \to A$ un morphisme dominant \`a fibre g\'en\'erique
g\'eom\'etrique rationnellement connexe.
Pour tout corps $F/{\mathbb C}$, l'application
$$A_{0}(X_{F}) \to  \Alb_{X}(F)$$
est surjective.
\end{prop}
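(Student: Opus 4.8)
Le plan est de montrer que $A=J(C)$ appartient \`a la classe $\mathcal{C}$, ce qui est exactement l'\'enonc\'e demand\'e. L'id\'ee directrice est que $A$ est domin\'ee par une puissance de la courbe $C$, laquelle est dans $\mathcal{C}$. Soit $g$ le genre de $C$ ; apr\`es choix d'un point base $p_{0}\in C(\C)$, on dispose du morphisme d'Albanese $u:C\to A$ et du morphisme somme
$$\sigma:C^{g}\to A,\qquad (x_{1},\dots,x_{g})\mapsto\sum_{i}u(x_{i}),$$
qui est surjectif puisque $g$ est le genre. D'apr\`es la proposition \ref{courbe} chaque facteur $C$ est dans $\mathcal{C}$, et d'apr\`es le lemme \ref{stabilite} (stabilit\'e par produit) la puissance $C^{g}$ est dans $\mathcal{C}$.

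\'Etant donn\'e un morphisme dominant $f:X\to A$ \`a fibre g\'en\'erique g\'eom\'etrique rationnellement connexe, je formerais ensuite le produit fibr\'e $W:=X\times_{A}C^{g}$, puis, par r\'esolution des singularit\'es, une d\'esingularisation $\tilde W$ de la composante de $W$ dominant $C^{g}$, munie des deux projections $r:\tilde W\to X$ et $q:\tilde W\to C^{g}$. La fibre g\'en\'erique g\'eom\'etrique de $q$ se d\'eduit, \`a \'equivalence birationnelle pr\`es, de celle de $f$ par extension des scalaires \`a $\overline{\C(C^{g})}$ ; elle est donc rationnellement connexe. Comme $C^{g}\in\mathcal{C}$, la propri\'et\'e d\'efinissant $\mathcal{C}$ appliqu\'ee au morphisme $q$ assure que $A_{0}(\tilde W_{F})\to\Alb_{\tilde W}(F)$ est surjective pour tout corps $F/\C$.

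Il resterait \`a comparer les applications d'Albanese \`a travers $r$. Le lemme \ref{isoalb}, appliqu\'e \`a $q$ puis \`a $f$, fournit des isomorphismes $\Alb_{\tilde W}\simeq\Alb_{C^{g}}=A^{g}$ et $\Alb_{X}\simeq\Alb_{A}=A$. Comme $f\circ r=\sigma\circ q$ par d\'efinition du produit fibr\'e, l'homomorphisme induit $\Alb_{\tilde W}\to\Alb_{X}$ s'identifie, via ces isomorphismes, \`a l'homomorphisme somme $A^{g}\to A$, lequel est surjectif sur les $F$-points. La fonctorialit\'e covariante de l'application d'Albanese des z\'ero-cycles donne alors, pour tout corps $F/\C$, un carr\'e commutatif
$$\xymatrix{
A_{0}(\tilde W_{F}) \ar[r]^{r_{*}} \ar[d] & A_{0}(X_{F}) \ar[d] \\
\Alb_{\tilde W}(F) \ar[r] & \Alb_{X}(F)
}$$
dont la fl\`eche verticale de gauche et la fl\`eche horizontale du bas sont surjectives. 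Une chasse au diagramme conclut : relevant $x\in\Alb_{X}(F)$ en $\tilde x\in\Alb_{\tilde W}(F)$, puis en $w\in A_{0}(\tilde W_{F})$, la classe $r_{*}(w)$ a pour image $x$. J'anticipe que le point d\'elicat est double : v\'erifier que la fibre g\'en\'erique g\'eom\'etrique de $q$ reste rationnellement connexe, et identifier l'application induite sur les vari\'et\'es d'Albanese avec l'homomorphisme somme surjectif $A^{g}\to A$ ; le reste de l'argument est formel.
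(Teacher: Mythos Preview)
Your argument is correct, but it follows a different route from the paper's own proof. The paper does not pass through the $g$-fold product $C^{g}$ nor through the closure of $\mathcal{C}$ under products. Instead it embeds a \emph{single} translated copy of $C$ into $A=J(C)$, chosen so that the generic point of $C$ lies in the open locus of $A$ over which the fibres of $f$ are smooth and rationally connected. Setting $Y:=X\times_{A}C$, the projection $Y\to C$ then has rationally connected generic fibre, and Graber--Harris--Starr gives a section $C\to Y$. One concludes directly from the commutative diagram
\[
\begin{tikzcd}
A_{0}(Y_{F}) \arrow[r] \arrow[d] & A_{0}(X_{F}) \arrow[d] \\
A_{0}(C_{F}) \arrow[r] & \Alb_{A}(F)
\end{tikzcd}
\]
using that the left vertical arrow is surjective (section) and that the bottom composite $A_{0}(C_{F})\to \Alb_{A}(F)=\Alb_{C}(F)$ is already an \emph{isomorphism}. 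The point is that a single copy of $C$ suffices because $A_{0}(C_{F})\simeq J(C)(F)$; no product is needed.

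Your approach trades this observation for the machinery of Lemma~\ref{stabilite}: you replace the isomorphism $A_{0}(C_{F})\simeq A(F)$ by the surjectivity of the sum map $A^{g}(F)\to A(F)$, and you replace the direct appeal to Graber--Harris--Starr by the fact that $C^{g}\in\mathcal{C}$ (which itself encodes Graber--Harris--Starr via Proposition~\ref{courbe}). Both arguments are valid; the paper's is shorter and avoids resolution of singularities and the product lemma, while yours shows more explicitly how the result sits inside the formal properties of the class $\mathcal{C}$. The two ``points d\'elicats'' you flag are indeed the right ones, and your treatment of each is sound: the generic fibre of $q$ is the base change of that of $f$ along the dominant map $\sigma$, and the induced map on Albanese varieties is the sum map, which admits the obvious section $a\mapsto (a,0,\dots,0)$.
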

\begin{proof}
Le cas $C= \P^1$ est clair. Supposons $C$ de genre au moins 1.
Par translation, on peut supposer que
l'on a un plongement $C \hookrightarrow J(C)=A$ tel que la restriction
de $X \to A$ au-dessus du point g\'en\'erique de $C$ est une vari\'et\'e projective et lisse
g\'eom\'etriquement int\`egre rationnellement connexe. 
Par le th\'eor\`eme de Graber-Harris-Starr (ou par le th\'eor\`eme de Tsen
si $X \to A$ est un sch\'ema de Severi-Brauer), 
 la projection $Y:= X_{C} \to C$
admet une section rationnelle. Comme $C$ est une courbe lisse et le morphisme
$Y \to C$ est propre, toute telle section rationnelle est un morphisme.

Pour tout corps $F/{\mathbb C}$, on a le diagramme commutatif suivant
  \[
\begin{tikzcd}
A_{0}(Y_{F}) \arrow[r] \arrow[d] &  A_{0}(X_{F}) \arrow[r] \arrow[d] & \Alb_{X}(F) \arrow[d] \\
A_{0}(C)(F) \arrow[r] &  A_{0}(A_{F}) \arrow[r] &  \Alb_{A}(F)
 \end{tikzcd}
	\]	
La fl\`eche compos\'ee $A_{0}(C)(F) \to  A_{0}(A_{F}) \to  \Alb_{A}(F)= \Alb_{C}(F)$
est un isomorphisme. La fl\`eche $\Alb_{X} \to \Alb_{A}$ est un isomorphisme (lemme \ref{isoalb}). 
La fl\`eche $A_{0}(Y_{F}) \to A_{0}(C)(F)$ est surjective puisque le morphisme $Y \to C$ admet une section. On conclut que
la fl\`eche $A_{0}(X_{F}) \to \Alb_{X}(F)$ est   surjective.
\end{proof}

\begin {rmk}
 Le point cl\'e dans la d\'emonstration est le m\^{e}me que  dans \cite[Prop. 1.7]{V24a}.
c'est le th\'eor\`eme de Graber, Harris et Starr.  
La question si la proposition ci-dessus
vaut pour $A$ une vari\'et\'e ab\'elienne quelconque est ouverte, d\'ej\`a dans le cas
o\`u $X\to A$ est un sch\'ema de Severi-Brauer   \cite{V24a}.
\end{rmk}

En combinant  le lemme \ref{produit}, le lemme \ref{stabilite}, la proposition \ref{courbe} 
 et la proposition  \ref{jacobienne},
on obtient  la proposition suivante, l\'eg\`ere g\'en\'eralisation de  \cite[Prop. 1.7]{V24a}.
\begin{prop}
Si une vari\'et\'e projective et lisse $Y$ est facteur  direct  birationnel
d'un produit de  courbes et  de jacobiennes de courbes,
et si $f: X \to Y$ est un morphisme projectif dominant de vari\'et\'es projectives 
lisses \`a fibre g\'en\'erique g\'eom\'etrique
  rationnellement connexe,
alors la vari\'et\'e 
$X$ poss\`ede un z\'ero-cycle universel param\'etr\'e par
$\Alb_{X} \simeq \Alb_{Y}$.
\end{prop}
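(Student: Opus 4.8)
Le plan consiste \`a reconna\^itre que l'hypoth\`ese faite sur $Y$ force cette vari\'et\'e \`a appartenir \`a la classe $\mathcal{C}$ d\'efinie plus haut ; la conclusion s'obtient alors en d\'eroulant simplement la d\'efinition de $\mathcal{C}$ pour le morphisme $f : X \to Y$. Tout le contenu math\'ematique est en effet d\'ej\`a contenu dans les \'enonc\'es pr\'eparatoires, et il ne reste qu'\`a les assembler.

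D'abord, je partirais des briques de base : la proposition \ref{courbe} place toute courbe projective et lisse dans $\mathcal{C}$, et la proposition \ref{jacobienne} y place toute jacobienne de courbe. J'invoquerais ensuite la stabilit\'e de $\mathcal{C}$ par produit (lemme \ref{stabilite}), qui, comme le lemme \ref{produit}, repose sur l'identification $\Alb_{Y_{1}\times Y_{2}} \simeq \Alb_{Y_{1}} \times \Alb_{Y_{2}}$ et sur la fonctorialit\'e covariante de l'application $A_{0}(X_{F}) \to \Alb_{X}(F)$. Une r\'ecurrence imm\'ediate sur le nombre de facteurs montre alors qu'un produit fini quelconque de courbes et de jacobiennes de courbes appartient \`a $\mathcal{C}$.

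Puisque $Y$ est par hypoth\`ese facteur direct birationnel d'un tel produit, j'appliquerais enfin les deux autres volets du lemme \ref{stabilite} : la stabilit\'e par \'equivalence birationnelle (fond\'ee sur le lemme \ref{birationnel}) et la stabilit\'e par facteur direct birationnel. On conclut que $Y \in \mathcal{C}$.

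Il ne reste qu'\`a utiliser la d\'efinition de $\mathcal{C}$ : comme $f : X \to Y$ est un morphisme dominant de vari\'et\'es connexes, projectives et lisses \`a fibre g\'en\'erique g\'eom\'etrique rationnellement connexe, l'appartenance $Y \in \mathcal{C}$ garantit que $A_{0}(X_{F}) \to \Alb_{X}(F)$ est surjective pour tout corps $F/\C$, ce qui signifie exactement que $X$ poss\`ede un z\'ero-cycle universel param\'etr\'e par $\Alb_{X}$ ; l'isomorphisme $\Alb_{X} \simeq \Alb_{Y}$ provient du lemme \ref{isoalb}. Je n'attends aucune difficult\'e propre \`a cet \'enonc\'e : l'obstacle r\'eel se situe en amont, dans la preuve de la proposition \ref{jacobienne}, qui repose sur le th\'eor\`eme de Graber-Harris-Starr.
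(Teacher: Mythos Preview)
Your proposal is correct and follows essentially the same approach as the paper: the paper's proof is a single sentence stating that one combines lemme \ref{produit}, lemme \ref{stabilite}, proposition \ref{courbe} and proposition \ref{jacobienne}, which is exactly the assembly you describe. Your explicit unpacking of the argument (building blocks in $\mathcal{C}$, stability under product, then under birational direct factor, then applying the definition of $\mathcal{C}$ to $f$) is a faithful expansion of that one-line proof.
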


\begin{rmk} La question 
si toute vari\'et\'e ab\'elienne
 est facteur direct, comme vari\'et\'e ab\'elienne, d'un produit de jacobiennes est un probl\`eme ouvert,
 discut\'e dans \cite{V24a}.  
\end{rmk}

\section{Solides rationnellement connexes sur les complexes}

 On donne ici des variations sur le th\`eme de  la section 3  de l'article
 \cite{V24b} de Voisin.
 
 Soit $X/\C$ une vari\'et\'e connexe, projective et lisse.
  Pour tout corps $F/\C$ on note $A^2(X_{F}):=CH^2(X_{F})_{alg} \subset CH^2(X_{F})$
 le sous-groupe form\'e des classes de cycles de codimension 2 dont l'image dans $CH^2(X_{\overline F})$
 est alg\'ebriquement \'equivalente \`a z\'ero.
  \'Etant donn\'e une vari\'et\'e connexe, projective et lisse $M/\C$ et un cycle $Z$  de codimension 2 sur $M \times X$,
 pour tout corps $F/\C$, on a une application induite
 $$\Theta_{Z} : CH_{0}(M_{F}) \to CH^2(X_{F})$$ et donc une application
 $A_{0}(M_{F}) \to  A^2(X_{F})$ fonctorielle en $F/\C$.
 On note ici $A_{0}(X_{F}) \subset CH_{0}(X_{F})$ le sous-groupe des classes de z\'ero-cycles
 de degr\'e z\'ero.
 On a par ailleurs l'homomorphisme 
 $A_{0}(M_{F}) \to \Alb_{M}(F)$, qui est fonctoriel en $F/\C$.
 
 G\'en\'eralisant des travaux de  Murre, Bloch, Srinivas,    
 Voisin  \cite[Cor. 0.9]{V24b} montre }\footnote{Pour l'aspect fonctoriel en le corps $F$,  voir les travaux de Achter, Casalaina-Martin et Vial
  \cite{ACMV23}.} :
 \begin{thm} \label{solidesRC}
Soit $X/\C$ un solide projectif et lisse rationnellement connexe.
 Il  existe une surface projective et lisse  $S$
   et
un cycle $Z$ de codimension 2 sur $S \times X$ qui,  pour tout corps $F/\C$,
induit un homomorphisme $A_{0}(S_{\overline{F}}) \to A^2(X_{\overline{F}}   )$ 
Galois-\'equivariant qui 
  se factorise par l'application d'Albanese de $S$ :$$ A_{0}(S_{\overline{F}}) \to \Alb_{S}(\overline{F}) \to  A^2(X_{\overline{F}}),$$ 
   l'application $Alb_{S}(\overline{F}) \to  A^2(X_{\overline{F}})$ \'etant 
    un isomorphisme Galois \'equivariant.
\end{thm}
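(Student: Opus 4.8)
The plan is to deduce the statement from the rational connectedness of $X$ by means of a decomposition of the diagonal in the style of Bloch and Srinivas, and then to promote the resulting isomorphism to a field-functorial, Galois-equivariant one by invoking the intermediate Jacobian theory of \cite{ACMV23}.

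First I would use that $X$ is rationally connected: for every algebraically closed field $\Omega \supseteq \C$ one has $CH_{0}(X_{\Omega})=\Z$, and hence there is a decomposition of the diagonal in $CH^3(X\times X)$ of the shape
\[
\Delta_{X} = \Gamma + X\times\{x_{0}\},
\]
where $x_{0}\in X(\C)$ and $\Gamma$ is supported on $D\times X$ for some divisor $D\subset X$. As $\dim X=3$, the divisor $D$ is a surface; choosing a desingularization $S:=\widetilde{D}$ and expressing $\Gamma$ as the pushforward of a cycle on $S\times X$ produces a codimension-two cycle $Z\in CH^2(S\times X)$, hence the homomorphism $\Theta_{Z}\colon A_{0}(S_{\overline F})\to A^2(X_{\overline F})$.

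Next I would compute the action of this decomposition on one-dimensional cycle classes. The correspondence $X\times\{x_{0}\}$ acts as zero there, so for every field $F/\C$ the identity of $A^2(X_{\overline F})$ is induced by $\Gamma$ and therefore factors through $A_{0}(S_{\overline F})$, via $\Theta_{Z}$ and a reverse correspondence extracted from $\Gamma$. This exhibits $A^2(X_{\overline F})$ as a functorial retract of $A_{0}(S_{\overline F})$; in particular $\Theta_{Z}$ is surjective. Since $X$ is rationally connected, $H^{3,0}(X)=0$, so the intermediate Jacobian $J^3(X)$ is an abelian variety and the Abel--Jacobi map identifies $A^2(X_{\overline F})$ with $J^3(X)(\overline F)$. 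As any homomorphism from $A_{0}(S_{\overline F})$ to the points of an abelian variety factors through $\Alb_{S}$, the map $\Theta_{Z}$ descends to a surjection $\Alb_{S}(\overline F)\to A^2(X_{\overline F})$; the reverse correspondence descends likewise, and comparing the two (or appealing to Murre's universal regular homomorphism) realises $J^3(X)$ as a direct factor of $\Alb_{S}$ up to isogeny. Refining the choice of $S$ then turns $\Alb_{S}(\overline F)\to A^2(X_{\overline F})$ into an isomorphism.

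Galois-equivariance of $\Theta_{Z}$ is automatic, since $Z$ is defined over $\C\subset F$. The genuinely delicate point, and the step I expect to be the main obstacle, is to establish that the isomorphism $\Alb_{S}(\overline F)\to A^2(X_{\overline F})$ is Galois-equivariant and natural in $F$ simultaneously for all $F/\C$. Over $\C$ this is the transcendental identification of $A^2$ with the Griffiths intermediate Jacobian, but over an arbitrary algebraically closed $\overline F$, and compatibly with base change and the $G_{F}$-action, one cannot argue purely Hodge-theoretically: one must control the field of definition of the inverse correspondence and realise the intermediate Jacobian together with its Abel--Jacobi map over the base rather than over $\C$. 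This is precisely the functorial theory of \cite{ACMV23}, which I would invoke to conclude; the statement then follows by combining the Bloch--Srinivas decomposition that produces $S$ and $Z$ with \cite[Cor. 0.9]{V24b} and \cite{ACMV23} for the functorial, Galois-equivariant refinement.
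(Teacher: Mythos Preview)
The paper does not supply its own proof of this theorem: it is quoted as a result of Voisin \cite[Cor.~0.9]{V24b}, with a footnote crediting \cite{ACMV23} for the functoriality in the field $F$. Your sketch correctly identifies both references and the Bloch--Srinivas decomposition-of-the-diagonal mechanism (after Murre, Bloch, Srinivas) that underlies them, so your outline matches what the paper invokes rather than proves; the one step you leave vague---``refining the choice of $S$'' to make $\Alb_{S}(\overline F)\to A^2(X_{\overline F})$ an isomorphism rather than just a surjection---is precisely part of what is packaged inside \cite[Cor.~0.9]{V24b}.
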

  On a donc  un diagramme commutatif
 $$\xymatrix{
 A_{0} (S_{\overline F}) \ar[r]   &   \Alb_{S}({\overline F})  \ar[r]^{\simeq}  & A^2(X_{\overline F})  \\
 A_{0} (S_{\overline F})^G   \ar[r] \ar[u] &   \Alb_{S}({\overline F})^G \ar[u] \ar[r]^{\simeq} &    A^2(X_{\overline F})^G \ar[u] \\
 A_{0}(S_{F}) \ar[u]\ar[r] & \Alb_{S}(F)  \ar[u]^{\simeq}  &  A^2(X_{F}) \ar[u]\\
 A_{0}(S_{F}) \ar[u]^{=} \ar[rr] &  &  A^2(X_{F}) \ar[u]^{=}
 }
 $$	
  
  Via  l'inverse de l'isomorphisme $Alb_{S}({\overline F})\simeq    A^2(X_{\overline F})$,
 ce diagramme induit un homomorphisme 
 $\theta : A^2(X_{F}) \to \Alb_{X}(F)$, et  le compos\'e
 $A_{0}(S_{F}) \to A^2(X_{F}) \to \Alb_{X}(F)$ 
 est \'egal \`a la fl\`eche $A_{0}(S_{F}) \to \Alb_{X}(F)$,
 comme on voit en composant avec l'injection de
 $  \Alb_{S}(F)$ dans $ A^2(X_{\overline F})^G$.

 \begin{thm}\label{equivgenerales}
 Soit $X/\C$ un solide projectif et lisse rationnellement connexe.
Soient $S/\C$ et $Z/\C$ comme ci-dessus. Supposons que l'on a $\Br(X)=0$.
Soit $F/\C$ un corps.

L'application $A^2(X_{F}) \to A^2(X_{\overline F})$ est injective.

Consid\'erons les hypoth\`eses suivantes.
 
 A) L'application $ A_{0}(S_{F}) \to \Alb_{S}(F)$ est surjective.
  
   (B) L'application $\theta : A^2(X_{F})   \to  \Alb_{S}(F)$ est un isomorphisme.

(C) L'application 
$A^2(X_{F})  \to  A^2(X_{\overline F})^G$  est un isomorphisme.

(D) L'application  
$ CH^2(X_{F}) \to  CH^2(X_{\overline F})^G$  est un isomorphisme.

(E) On a  $H^3(F,\Q/\Z(2))= H^3_{nr}(F(X)/F,\Q/\Z(2) )$.

\medskip

On a les implications suivantes

$\bullet$  (A) implique (B)

$\bullet$  (B), (C) et (D) sont \'equivalents

$\bullet$  Chacune des hypoth\`eses pr\'ec\'edentes implique (E).
  
  \end{thm}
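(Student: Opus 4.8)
The plan is to treat $\theta$ as, up to canonical isomorphisms, the restriction map on codimension-two cycles. By Proposition~\ref{roitman}(3) the map $\Alb_{S}(F) \to \Alb_{S}(\overline F)^{G}$ is an isomorphism, and Theorem~\ref{solidesRC} gives a Galois-equivariant isomorphism $\Alb_{S}(\overline F) \simeq A^2(X_{\overline F})$; composing, $\theta$ is identified with the restriction $r : A^2(X_{F}) \to A^2(X_{\overline F})^{G}$ followed by a fixed isomorphism $A^2(X_{\overline F})^{G} \simeq \Alb_{S}(F)$. In particular (B) and (C) are literally the same assertion, and $\Ker\theta = \Ker\big(A^2(X_{F}) \to A^2(X_{\overline F})\big)$. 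Moreover the bottom of the large diagram shows that the Albanese map $A_{0}(S_{F}) \to \Alb_{S}(F)$ factors as $A_{0}(S_{F}) \to A^2(X_{F}) \xrightarrow{\theta} \Alb_{S}(F)$; hence if (A) holds, then $\theta$ is surjective.

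Next I would prove the injectivity of $A^2(X_{F}) \to A^2(X_{\overline F})$, which is the first assertion of the theorem and which, by the previous paragraph, also makes $\theta$ injective and thus completes (A)$\Rightarrow$(B). A transfer argument first shows this kernel is torsion: since $CH^2$ commutes with filtered colimits, a class dying over $\overline F$ already dies over a finite extension $L/F$, and $N_{L/F}\circ \mathrm{res}_{L/F}$ is multiplication by $[L:F]$. To see the torsion vanishes I would invoke the exact sequence of Bloch--Ogus type obtained by applying Hochschild--Serre to the Gersten (coniveau) complex, as developed by Bloch--Ogus, Merkurjev--Suslin, Kahn and Colliot-Th\'el\`ene--Voisin: it identifies the torsion of $\Ker\big(CH^2(X_{F}) \to CH^2(X_{\overline F})\big)$ with Galois-cohomology groups built from $H^{*}_{\text{\'et}}(X_{\overline F},\Q/\Z(2))$ and from the Brauer group. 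Rational connectedness gives $H^{i}(X_{\overline F},\mathcal O)=0$ for $i>0$ and $\Br(X_{\overline F})=0$, and the hypothesis $\Br(X)=0$ ($H^{3}(X,\Z)$ torsion-free) kills the remaining term. Verifying the vanishing of all these flanking terms is the technical heart of this step.

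For (C)$\Leftrightarrow$(D) I would run the snake lemma on the short exact sequences $0 \to A^2 \to CH^2 \to N \to 0$ over $F$ and over $\overline F$, where $N_{\bullet}=CH^2(X_{\bullet})/A^2(X_{\bullet})$ denotes codimension-two cycles modulo algebraic equivalence. The key point is that $N_{F} \to N_{\overline F}^{G}$ is an isomorphism: $N_{\overline F}$ is finitely generated with trivial $G$-action (the relevant cohomology being pulled back from $\C$), and its generators are classes of curves already defined over $\C\subset F$. Since the left vertical map $a:A^2(X_{F})\to A^2(X_{\overline F})^{G}$ is injective by the previous step and $N_{F}\xrightarrow{\sim}N_{\overline F}^{G}$, the snake lemma yields $\Ker c=0$ and $\Coker(a)\simeq \Coker(c)$ for $c:CH^2(X_{F})\to CH^2(X_{\overline F})^{G}$; hence $a$ is an isomorphism iff $c$ is, i.e. (C)$\Leftrightarrow$(D). (The implication (D)$\Rightarrow$(C) is even a direct diagram chase, needing only injectivity of $N_{F}\to N_{\overline F}^{G}$.)

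Finally, for (D)$\Rightarrow$(E) I would use the corresponding Bloch--Ogus sequence one cohomological degree higher, which under the present hypotheses furnishes a canonical injection
\[
H^3_{nr}(F(X)/F,\Q/\Z(2))\big/H^3(F,\Q/\Z(2)) \hookrightarrow \Coker\big(CH^2(X_{F}) \to CH^2(X_{\overline F})^{G}\big),
\]
the complementary terms vanishing again because $X$ is rationally connected with $\Br(X)=0$ and $CH_{0}(X_{\overline F})$ is supported on a surface (Bloch--Srinivas decomposition of the diagonal, as used in Theorem~\ref{solidesRC}). Under (D) the right-hand side is zero, so $H^3_{nr}(F(X)/F,\Q/\Z(2))=H^3(F,\Q/\Z(2))$, which is (E); combined with (A)$\Rightarrow$(B)$\Leftrightarrow$(C)$\Leftrightarrow$(D) this shows each hypothesis implies (E). The main obstacle throughout is setting up the two Bloch--Ogus exact sequences precisely enough to see that their flanking Galois-cohomology terms vanish under $\Br(X)=0$ and rational connectedness; the remaining steps are then formal diagram chases.
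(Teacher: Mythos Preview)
Your proof is correct and follows essentially the same route as the paper's. The paper quotes \cite[Cor.~4.2(iii)]{CT15} as a black box for both the injectivity of $CH^2(X_{F})\to CH^2(X_{\overline F})$ and the injection $H^3_{nr}(F(X)/F,\Q/\Z(2))/H^3(F,\Q/\Z(2))\hookrightarrow \Coker\big(CH^2(X_F)\to CH^2(X_{\overline F})^G\big)$, and quotes \cite[Prop.~5.1(iv)]{CT15} for $\NS^2(X)\xrightarrow{\sim}\NS^2(X_{\overline F})$; what you do is unpack precisely these references (transfer plus the Bloch--Ogus/Merkurjev--Suslin analysis of torsion in $CH^2$, and the snake lemma on $0\to A^2\to CH^2\to \NS^2\to 0$). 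One cosmetic point: the isomorphism $\Alb_{S}(F)\simeq \Alb_{S}(\overline F)^{G}$ you invoke is simply Galois descent for rational points and does not really need Proposition~\ref{roitman}(3).
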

  
  \begin{proof}
  Pour $X/\C$ rationnellement connexe, le module galoisien
  $\Pic( X_{{\overline F}})$ est un r\'eseau avec action triviale
  de $G_{F}$. Si de plus $X$ est de dimension 3, alors
  $H^3_{nr}(X_{\overline{F}},\Q/\Z(2))=0$
  \cite[Cor. 6.2]{CTV12}.
  Pour $X$ rationnellement connexe de dimension 3 avec
  $\Br(X)=0$, le corollaire 4.2 (iii)  de \cite{CT15} donne donc  d'une part  
  une injection
  $$ CH^2(X_{F}) \hookrightarrow  CH^2(X_{{\overline F}}),$$
  et donc une injection $$ A^2(X_{F}) \hookrightarrow  A^2(X_{{\overline F}})$$
  d'autre part une injection
    $$ H^3_{nr}(F(X)/F,\Q/\Z(2) )/ H^3(F,\Q/\Z(2)) \hookrightarrow 
    \Coker[CH^2(X_{F})\to  CH^2(X_{{\overline F}})^G].$$
   On a les suites exactes compatibles
    $$0 \to A^2(X) \to CH^2(X) \to \NS^2(X) \to 0$$
  $$0 \to A^2(X_{F}) \to CH^2(X_{F}) \to \NS^2(X_{F}) \to 0$$
     $$0 \to A^2(X_{{\overline F}}) \to CH^2(X_{{\overline F}}) \to \NS^2(X_{{\overline F}}) \to 0.$$
   Pour $X$ comme ci-dessus,  l'application $\NS^2(X) \to \NS^2(X_{{\overline F}})$ est un isomorphisme
     \cite[Prop. 5.1 (iv)]{CT15},
      donc l'application $\NS^2(X_{F}  ) \to \NS^2(X_{{\overline F}})^{G_{F}}$ est surjective.
     Ainsi les conoyaux  de
      $$A^2(X_{F}) \to A^2(X_{{\overline F}})^G$$
      et de 
     $$CH^2(X_{F}) \to CH^2(X_{{\overline F}})^G$$
co\"{\i}ncident.

La d\'emonstration des implications du th\'eor\`eme suit alors de la consid\'eration du diagramme
suivant le th\'eor\`eme  \ref{solidesRC}.
  \end{proof}
  
  \begin{rmk} 
  Les hypoth\`eses ``(C) pour tout $F/\C$''  ou ``(D) pour tout $F/\C$'' sont des versions de l'hypoth\`ese :
 `` La vari\'et\'e $X$ poss\`ede un cycle de codimension deux universel''
 dans le contexte de Voisin \cite{V15} (voir \cite[\S 5.2]{CT15}.  L'\'equivalence des conditions
 ``(B) pour tout $F/\C$''  et ``(E) pour tout $F/\C$'' dans ce cadre est \cite[Cor. 3.3]{V15}). 
 Des \'enonc\'es dans un cadre plus g\'en\'eral sont  \cite[Thm. 1.10, Thm. 3.1]{V15} et \cite[Thm. 5.4]{CT15}.
  \end{rmk}
 
 \begin{rmk}
 Comme \'etabli dans \cite[Thm. 1.9, Cor. 1.11]{V15}, il existe des  solides rationnellement connexes $X$   avec $\Br(X)=0$
 pour lesquels il existe un corps $F$ tel qu'aucune des propri\'et\'es (A), (B), (C), (D), (E)  ne valent.
 Cette remarque est une variante de   \cite[Cor. 0.14, Cor. 3.1]{V24b}. 
 \end{rmk}

\begin{rmk}
Par un th\'eor\`eme r\'ecent de Koll\'ar et Tian \cite[Thm. 6]{KT24} sur les 1-cycles, 
pour  tout solide projectif et lisse rationnellement connexe $X$, et tout corps $F/\C$,  
l'application $A^2(X_{F}) \to A^2(X_{\overline F})$ est injective, et il en est
alors de m\^{e}me de l'application
 $CH^2(X_{F}) \to CH^2(X_{\overline F})$.
Ceci implique que la fl\`eche  
$A_{0}(S_{F}) \to A^2(X_{F})$ dans 
le grand diagramme pr\'ec\'edant le th\'eor\`eme \ref{equivgenerales} se factorise 
via l'homomorphisme
$A_{0}(S_{F}) \to \Alb_{S}(F)$, et  que
 la fl\`eche $\theta : A^2(X_{F}) \to \Alb_{X}(F)$
 est  injective.  
 Comme le note Z.~Tian, l'injectivit\'e de  $A^2(X_{F}) \to A^2(X_{\overline F})$
assure que les conditions (B), (C), (D) ci-dessus sont \'equivalentes pour tout  tel solide 
projectif et lisse rationnellement connexe $X$, 
 sans qu'on ait besoin de  l'hypoth\`ese $\Br(X)=0$ faite au th\'eor\`eme \ref{equivgenerales}.

Ceci dit, pour toute vari\'et\'e projective et lisse $X/\C$ avec $\Br(X)\neq 0$,
d'apr\`es  \cite{CT19b},
il existe des corps de fonctions d'une variable $F/\C$ tels que l'on ait $H^3_{nr}(F(X)/F, \Q/\Z(2)) \neq 0$. 
Pour $X/\C$ solide rationnellement connexe
satisfaisant  $\Br(X)\neq 0$, et $F/\C$ corps de fonctions d'une variable convenable, 
 (E) est donc en d\'efaut, et donc, d'apr\`es le th\'eor\`eme \ref{equivgenerales}, les hypoth\`eses
  (A), (B), (C), (D) le sont aussi.
 \end{rmk}

Pour les hypersurfaces cubiques lisses dans $\P^4_{\C}$, un couple $(S,Z)$ comme dans
le th\'eor\`eme \ref{solidesRC}
 est bien connu.

   \begin{thm}\label{equivcubiques}
  Soit $X \subset \P^4_{\mathbb  C}$ une hypersurface cubique lisse.
  Soit $S/\C$ la surface de Fano des droites trac\'ees sur $X$
  et $Z \subset S\times_{\C}X$ la correspondance associ\'ee. Soit $F/{\mathbb C}$ un corps.
 Avec les notations ci-dessus, les \'enonc\'es suivants sont \'equivalents.
 
  (A) L'application $A_{0}(S_{F}) \to \Alb_{S}(F)$ est surjective.
  
   (B) L'application $A^2(X_{F})  \to \Alb_{S}(F)$ est un isomorphisme.

(C) L'application  
$ A^2(X_{F}) \to  A^2(X_{\overline F})^G$  est un isomorphisme.

(D) L'application 
$ CH^2(X_{F}) \to  CH^2(X_{\overline F})^G$  est un isomorphisme.

(E) On a  $H^3(F,\Q/\Z(2))= H^3_{nr}(F(X)/F,\Q/\Z(2) )$.
  \end{thm}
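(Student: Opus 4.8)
The plan is to deduce almost everything from Theorem~\ref{equivgenerales} and to add, for the specific geometry of the cubic, the two implications that are missing there, namely $(B)\Rightarrow(A)$ and $(E)\Rightarrow(B)$. First I would record that a smooth cubic threefold $X$ is a Fano variety, hence rationally connected, and that $H^i(X,\mathcal{O}_X)=0$ for $i>0$; this gives $\Pic(X_{\overline{F}})=\Z$ with trivial Galois action and $\Br(X)=0$. Consequently $X$ satisfies the hypotheses of Theorem~\ref{equivgenerales}, and the Fano surface of lines $S$ together with the incidence correspondence $Z$ is an admissible pair $(S,Z)$ as in Theorem~\ref{solidesRC}, the isomorphism $\Alb_S(\overline{F})\xrightarrow{\sim}A^2(X_{\overline{F}})$ being exactly its content. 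Theorem~\ref{equivgenerales} then yields for free the implication $(A)\Rightarrow(B)$, the equivalences $(B)\Leftrightarrow(C)\Leftrightarrow(D)$, and the fact that each of these implies $(E)$. I would also keep in view the commutative triangle $alb_S=\theta\circ\Theta_Z$, where $\Theta_Z\colon A_0(S_F)\to A^2(X_F)$ is the Fano correspondence and $\theta\colon A^2(X_F)\to\Alb_S(F)$ is the map of the big diagram preceding Theorem~\ref{equivgenerales}. So only $(B)\Rightarrow(A)$ and $(E)\Rightarrow(B)$ remain.

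For $(B)\Rightarrow(A)$: under $(B)$ the map $\theta$ is an isomorphism, so from $alb_S=\theta\circ\Theta_Z$ the surjectivity of $alb_S\colon A_0(S_F)\to\Alb_S(F)$ (which is $(A)$) is equivalent to the surjectivity of the Fano correspondence $\Theta_Z\colon A_0(S_F)\to A^2(X_F)$. The key input here is that for a smooth cubic threefold the group $A^2(X_F)=CH^2(X_F)_{\mathrm{alg}}$ is generated by the classes of lines, i.e. $\Theta_Z$ is surjective over every field $F\supseteq\C$. Over $\overline{F}$ this is automatic from Theorem~\ref{solidesRC} together with Roitman's theorem (since $A_0(S_{\overline{F}})\to\Alb_S(\overline{F})$ is onto and $\Alb_S(\overline{F})\cong A^2(X_{\overline{F}})$); over a general $F$ it is the classical surjectivity of the cylinder homomorphism of the cubic, which I would justify by the explicit geometry of the six lines through a point of $X$ and a spreading-out argument. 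Granting it, $(B)$ forces $alb_S$ onto, which is $(A)$.

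For $(E)\Rightarrow(B)$ (equivalently $\Rightarrow(D)$): I would start from the injection supplied by \cite[Cor.~4.2(iii)]{CT15}, namely $H^3_{nr}(F(X)/F,\Q/\Z(2))/H^3(F,\Q/\Z(2))\hookrightarrow\Coker[CH^2(X_F)\to CH^2(X_{\overline{F}})^G]$, and from the identification (carried out in the proof of Theorem~\ref{equivgenerales}, using that $\NS^2(X_F)\to\NS^2(X_{\overline{F}})^G$ is onto) of this cokernel with the cokernel of $A^2(X_F)\to A^2(X_{\overline{F}})^G$. Condition $(E)$ says the left-hand side is zero, while $(D)$ says the right-hand side is zero; so it suffices to show that this injection is in fact an isomorphism for a cubic threefold. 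This is where the torsion-free integral cohomology of $X$, the vanishing $H^3_{nr}(X_{\overline{F}},\Q/\Z(2))=0$ of \cite[Cor.~6.2]{CTV12}, and $\Pic(X_{\overline{F}})=\Z$, $\Br(X)=0$ enter: they are exactly the hypotheses under which the reciprocal bound of \cite{CT15} (the fixed-field avatar of Voisin's \cite[Cor.~3.3]{V15}) upgrades the injection to a bijection. This closes $(E)\Leftrightarrow(D)$ and hence $(E)\Leftrightarrow(B)$.

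I expect the main obstacle to be this last step: turning the \cite{CT15} injection into an isomorphism \emph{at a fixed field} $F$, rather than ``for all $F$'' as in Voisin's statement. The surjectivity of the cylinder map over non-closed $F$ needed for $(B)\Rightarrow(A)$ is classical input about cubic threefolds, and $(B)\Leftrightarrow(C)\Leftrightarrow(D)$ is already available (indeed it holds for every rationally connected solide by the Koll\'ar--Tian injectivity $A^2(X_F)\hookrightarrow A^2(X_{\overline{F}})$ of \cite{KT24}, without the hypothesis $\Br(X)=0$). The delicate point is thus controlling the coniveau/Bloch--Ogus contributions to the cokernel and checking that they all vanish for the cubic, so that unramified $H^3$ measures the defect of $(D)$ exactly and not merely injectively.
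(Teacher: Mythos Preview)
Your architecture matches the paper's exactly: use Theorem~\ref{equivgenerales} for $(A)\Rightarrow(B)$, $(B)\Leftrightarrow(C)\Leftrightarrow(D)$ and $(\cdot)\Rightarrow(E)$, then add $(B)\Rightarrow(A)$ and $(E)\Rightarrow(D)$ using the specific geometry of the cubic. The gap is in your justification of the surjectivity of $\Theta_Z\colon A_0(S_F)\to A^2(X_F)$ over \emph{every} $F/\C$, which you label ``classical'' and propose to prove via the six lines through a point plus spreading-out. This does not work: the zero-cycle on $S_F$ cut out by the six lines through $p\in X(F)$ has image in $A^2(X_F)$ that is \emph{independent of $p$} (any morphism from the rationally connected $X$ to an abelian variety is constant), so this construction yields a single class, not all of $A^2(X_F)$; and spreading out from $\overline{F}$ does not by itself manufacture $F$-rational preimages. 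The paper instead invokes Mingmin Shen \cite[Thm.~1.7, Thm.~4.1]{MSh19} for this surjectivity and explicitly calls it a ``r\'esultat remarquable''.

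For $(E)\Rightarrow(D)$, you correctly flag the passage from the injection to an isomorphism as the delicate point and point in the right direction. The paper resolves it with precise citations: from $\Br(X)=0$ and $H^3_{nr}(\overline{F}(X),\Q/\Z(2))=0$ (\cite[Cor.~6.2, Thm.~8.1, Cor.~8.2]{CTV12}) it applies \cite[Lemme~5.7, Thm.~5.8]{CT15} together with the correction \cite[Thm.~2.1]{CT19a} to obtain, at the fixed field $F$, the isomorphism
\[
H^3_{nr}(F(X),\Q/\Z(2))/H^3(F,\Q/\Z(2))\;\simeq\;\Coker\bigl[CH^2(X_F)\to CH^2(X_{\overline{F}})^G\bigr],
\]
not merely the injection of \cite[Cor.~4.2(iii)]{CT15}. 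So your worry was well placed, and these are the references that dissolve it.
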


\begin{proof}

Soit donc $Z \subset S \times X$ la vari\'et\'e d'incidence des droites contenue dans $X$.
Soit $p : Z \to S$ et $q: Z \to X$.  On a une application
$ q_{*} \circ p^* : A_{0}(S_{F}) \to A^2(X_{F})$
et un isomorphisme
$ p_{*} \circ q^* : CH^2(X)_{alg} \to \Pic^0(S).$
D'o\`u une application  $ A_{0}(S) \to \Pic^0(S)$
qu'on peut composer
avec  l'isomorphisme
$\Pic^0_{S} \simeq \Alb_{S}$
(\cite[Chap. 5, \S 3]{H23}).
Voir aussi \cite[\S 2B]{CTP18}.
 Mingmin Shen  \cite[Thm. 1.7, Thm. 4.1]{MSh19} a \'etabli le r\'esultat
 remarquable suivant  : pour tout corps $F/\C$,
l'application $A_{0}(S_{F}) \to A^2(X_{F})$  est surjective.
Ceci montre que (B) implique (A).

Pour $X/\C$ une hypersurface cubique lisse dans $\P^4_{\C}$, on a $\Br(X)=0$, et l'on sait que pour  $F/\C$ un corps,
on  a  $H^3_{nr}({\overline F}(X),\Q/\Z(2))=0$   \cite[Cor. 6.2]{CTV12}, \cite[Thm. 8.1, Cor. 8.2]{CTV12}.
 Il r\'esulte  alors  de
\cite[Lemme 5.7, Thm. 5.8]{CT15} avec la correction
  \cite[thm. 2.1]{CT19a} que l'on a 
$$H^3_{nr}(F(X), \Q/\Z(2))/H^3(F,\Q/\Z(2))  \simeq  \Coker[CH^2(X) \to CH^2(X_{{\overline F}})^G].$$
Ainsi (E) implique (D).

Les autres implications ont \'et\'e \'etablies dans le th\'eor\`eme \ref{equivgenerales}.
\end{proof}

 \begin{comment} 
\cite[Lemme 5.7, Thm. 5.8]{CT15} avec la correction
  \cite[thm. 2.1]{CT19}. On peut aussi le voir ainsi : on choisit une droite  dans $X_{\C}$,
la famille des plans contenant une droite fixe de $X$ donne une \'equivalence birationnelle
avec une famille de coniques $Y \to \P^2_{\C}$, et on utilise un r\'esultat
de $K$-th\'eorie alg\'ebrique pour voir que $$H^3_{nr}({\overline F}(Y),\Q/\Z(2)) \subset
H^3_{nr}({\overline F}(Y)/{\overline F}(\P^2),\Q/\Z(2)$$ est dans l'image de
$H^3({\overline F}(\P^2),\Q/\Z(2))=0$.

On sait d\'ej\`a que l'on a 
 Il r\'esulte   de
\cite[Lemme 5.7, Thm. 5.8]{CT15} avec la correction
  \cite[thm. 2.1]{CT19} que l'on a 
$$H^3_{nr}(F(X), \Q/\Z(2))/H^3(F,\Q/\Z(2))  \simeq  \Coker[CH^2(X) \to CH^2(X_{{\overline F}}^G].$$
Ainsi (E) implique (D).
\end{comment}

 \begin{cor}\label{applic}
 Soit $X \subset \P^4_{\mathbb  C}$ une hypersurface cubique lisse.
 Soit $F/{\mathbb C}$ un corps de fonctions d'une variable.
 Alors toutes les propri\'et\'es du th\'eor\`eme \ref{equivgenerales}
 valent pour  $X_{F}$.
 \end{cor}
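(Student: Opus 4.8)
The plan is to establish property (E) of Theorem~\ref{equivgenerales} for $X_{F}$; by the equivalences of Theorem~\ref{equivcubiques} this automatically yields (A), (B), (C), (D) as well, the injectivity of $A^2(X_{F}) \to A^2(X_{\overline{F}})$ holding for smooth cubic threefolds in any case. The point of departure is that, $F$ being a function field of one variable over the algebraically closed field $\C$, one has $\mathrm{cd}(F) \le 1$. Consequently $H^3(F,\Q/\Z(2))=0$, and property (E) reduces to the single assertion
$$ H^3_{nr}(F(X)/F,\Q/\Z(2)) = 0. $$

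First I would use that unramified cohomology is a subgroup of the cohomology of the function field, $H^3_{nr}(F(X)/F,\Q/\Z(2)) \hookrightarrow H^3(F(X),\Q/\Z(2))$, and that restriction to $\overline{F}(X)$ carries it into $H^3_{nr}(\overline{F}(X)/\overline{F},\Q/\Z(2))$, which vanishes for a smooth cubic threefold (this is the vanishing cited above, \cite[Cor. 6.2]{CTV12}). Hence every class of $H^3_{nr}(F(X)/F,\Q/\Z(2))$ dies after restriction to $\overline{F}(X)$. Since $X$ is geometrically integral, $F$ is algebraically closed in $F(X)$ and $\Gal(\overline{F}(X)/F(X)) = G_{F}$; the Hochschild--Serre spectral sequence $H^p(G_{F},H^q(\overline{F}(X),\Q/\Z(2))) \Rightarrow H^{p+q}(F(X),\Q/\Z(2))$ then computes the kernel of this restriction. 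Because $\mathrm{cd}(F)\le 1$ annihilates all terms with $p\ge 2$ (the coefficients being torsion modules), this kernel is a subquotient of the single group $H^1(F,H^2(\overline{F}(X),\Q/\Z(2)))$.

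The heart of the argument, and the step I expect to be most delicate, is to match the unramified condition over $F$ with this Galois filtration: a class unramified at every divisorial valuation of $F(X)/F$ must have its coefficient in $H^2(\overline{F}(X),\Q/\Z(2))$ unramified at every divisorial valuation of $\overline{F}(X)/\overline{F}$, so that it lies in $H^1(F,H^2_{nr}(\overline{F}(X)/\overline{F},\Q/\Z(2)))$. This compatibility of residues with the Hochschild--Serre filtration is precisely the Galois-descent bookkeeping carried out in \cite{CTV12}. But $H^2_{nr}(\overline{F}(X)/\overline{F},\Q/\Z(2))$ is, up to twist, the Brauer group $\Br(X_{\overline{F}})$, which is zero for a cubic threefold. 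Therefore $H^1(F,H^2_{nr}(\overline{F}(X)/\overline{F},\Q/\Z(2)))=0$, whence $H^3_{nr}(F(X)/F,\Q/\Z(2))=0$ and (E) holds. Invoking Theorem~\ref{equivcubiques} then gives all the properties of Theorem~\ref{equivgenerales} for $X_{F}$; in particular the surjectivity (A), which is not at all elementary here since $\Alb_{S}$ is the intermediate Jacobian and is not a Jacobian of a curve, is obtained for free from the cohomological vanishing.
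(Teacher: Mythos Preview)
Your overall strategy---establish (E) and then invoke Theorem~\ref{equivcubiques}---matches the paper's, which simply quotes \cite[Thm.~1.2]{CTP18} for the vanishing $H^3_{nr}(F(X)/F,\Q/\Z(2))=0$ and is done. The issue is your attempt to \emph{reprove} that vanishing by a bare Hochschild--Serre argument.

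The step you yourself flag as delicate is in fact a genuine gap. You need: if $\alpha\in H^3(F(X))$ lies in the image of $H^1(F,H^2(\overline F(X)))$ and is unramified on $X_F$, then it lies in the image of $H^1(F,H^2_{nr}(\overline F(X)))$. Granting the (true) compatibility of residues with the Hochschild--Serre edge maps, what you actually obtain is that the class of $\alpha$ in $H^1(F,H^2(\overline F(X)))$ dies in $H^1\bigl(F,\bigoplus_{y}H^1(\kappa(y))\bigr)$. But from the short exact sequence
\[
0 \to H^2(\overline F(X)) \to \bigoplus_{y\in X_{\overline F}^{(1)}} H^1(\kappa(y)) \to Q \to 0
\]
(using $H^2_{nr}(\overline F(X))=\Br(X_{\overline F})=0$), the map $H^1(F,H^2(\overline F(X)))\to H^1(F,\bigoplus_y H^1(\kappa(y)))$ has kernel equal to the image of the connecting map $Q^{G_F}\to H^1(F,H^2(\overline F(X)))$, and there is no reason for this to vanish. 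The cokernel $Q$ contains $H^1(X_{\overline F},\mathcal H^2)$, which is typically large. So ``$\alpha$ unramified'' does \emph{not} force its Hochschild--Serre coefficient to be unramified; $H^1(F,-)$ does not preserve the injection $H^2(\overline F(X))\hookrightarrow\bigoplus_y H^1(\kappa(y))$.

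A structural red flag confirms this: your argument uses nothing about cubic threefolds beyond $\Br(X_{\overline F})=0$ and $H^3_{nr}(X_{\overline F})=0$, hence would prove $H^3_{nr}(F(X)/F)=0$ for \emph{every} smooth projective $X/\C$ with these two vanishings, over every $F\supset\C$ of cohomological dimension~$\le 1$. The result \cite[Thm.~1.2]{CTP18} that the paper invokes is, by contrast, specific to cubic threefolds and uses their geometry (conic-bundle structure, lines); the general statement you would be proving is not known and is not what \cite{CTV12} establishes. You need that geometric input, or an appeal to \cite{CTP18}, to close the argument.
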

 \begin{proof}  Pour $F$ et $X$ comme dans l'\'enonc\'e, on a \'etabli dans \cite[Thm. 1.2]{CTP18}
  que l'on a  $  H^3_{nr}(F(X)/F,\Q/\Z(2))=0$.  QED
\end{proof}

 \begin{rmk} 
  Il existe une surface projective et lisse $Y/\mathbb C$ et un corps de fonctions d'une variable $F/\mathbb C$ 
 pour lesquels l'application $A_{0}(Y_{F}) \to \Alb_{Y}(F)$ n'est pas surjective.
 C. Voisin vient de donner un tel exemple.   On part d'une
vari\'et\'e $X$ qui est un solide double quartique tr\`es g\'en\'eral avec 7 points singuliers
 ordinaires, qui comme montr\'e dans \cite{V15} n'a pas de cycle de codimension deux universel. 
   La   jacobienne interm\'ediaire $J$ de $X$ est une vari\'et\'e ab\'elienne principalement polaris\'ee
  de dimension 3, c'est  la jacobienne d'une courbe $\Gamma$ de genre 3.
  On utilise \cite[Cor. 0.9]{V24b}.   
  On prend $Y=S$ une surface comme dans le th\'eor\`eme   \ref{solidesRC} ci-dessus, 
avec $\Alb_{S}=J$.   On prend pour $F$ le corps $\C(\Gamma)$.  On  montre que le plongement de $\Gamma$ dans $J$
   donne un point de $\Alb_{S}(F)$ qui n'est pas dans l'image de $A_{0}(S_{F})$.
\end{rmk}

 \begin{rmk}
 Soit $X$   comme dans le th\'eor\`eme \ref{equivgenerales}. Si $X$ est stablement rationnelle, ou r\'etractilement  rationnelle, ou simplement universellement $CH_{0}$-triviale, i.e. $deg : CH_{0}(X_{F}) \to \Z$
 est un isomorphisme pour tout corps $F/\C$, alors  
 $$H^3(F,\Q/\Z(2) )= H^3_{nr}(F(X)/F,\Q/\Z(2))$$  et
 toutes les propri\'et\'es du th\'eor\`eme valent pour tout corps  $F/\C$.
 On ne conna\^{\i}t pas d'exemple d'hypersurface cubique lisse dans $\P^4_{\mathbb C}$
pour laquelle les  propri\'et\'es du th\'eor\`eme   \ref{equivcubiques} sont en d\'efaut. 
On sait 
qu'il existe des hypersurfaces cubiques lisses  dans $\P^4_{\C}$
qui sont universellement $CH_{0}$-triviales, par exemple la cubique de Fermat.
Pour de telles hypersurfaces cubiques lisses, la surface de Fano $S$ des droites 
a donc la propri\'et\'e remarquable que pour tout corps $F/{\mathbb C}$,
l'application $A_{0}(S_{F}) \to \Alb_{S}(F)$ est surjective :  la surface de Fano
$S$ poss\`ede un z\'ero-cycle universel param\'etr\'e par
$\Alb_{S}$.
 \end{rmk}
 
 \medskip
 
 {\it  Je remercie Bruno Kahn, Federico Scavia,  Zhiyu Tian, Claire Voisin et Olivier Wittenberg pour diverses remarques sur cet article.}

\end{document}